\newtheorem{theorem}{Theorem}[section]
\newtheorem{proposition}[theorem]{Proposition}
\newtheorem{lemma}[theorem]{Lemma}
\newtheorem{corollary}[theorem]{Corollary}
\theoremstyle{definition}
\newtheorem{definition}[theorem]{Definition}
\newtheorem{remark}[theorem]{Remark}
\newcommand{\Z}{\mathbb{Z}}
\newcommand{\Q}{\mathbb{Q}}
\newcommand{\qc}{\Z(p^\infty)}
\newcommand{\Ker}{{\rm Ker}}
\begin{document}
	
\author[P. Danchev]{Peter Danchev}
\address{Institute of Mathematics and Informatics, Bulgarian Academy of Sciences, 1113 Sofia, Bulgaria}
\email{danchev@math.bas.bg; pvdanchev@yahoo.com}
\author[B. Goldsmith]{Brendan Goldsmith}
\address{Technological University, Dublin, Dublin 7, Ireland}
\email{brendan.goldsmith@TUDublin.ie; brendangoldsmith49@gmail.com}
	
\vskip1.0pc
	
\title[Super Bassian and Nearly Generalized Bassian Groups]{Supper Bassian and Nearly Generalized Bassian \\ Abelian Groups}
\keywords{Abelian groups, Bassian groups, supper Bassian groups, generalized Bassian groups, (hereditarily, nearly, super) generalized Bassian groups, Hopfian and co-Hopfian groups}
\subjclass[2010]{20K10, 20K20, 20K21}
	
\begin{abstract} In connection to two recent publications of ours in Arch. Math. Basel (2021) and Acta Math. Hung. (2022), respectively, and in regard to the results obtained in Arch. Math. Basel (2012), we have the motivation to study the {\it near} property of both Bassian and generalized Bassian groups. Concretely, we prove that if an arbitrary reduced group has the property that all of its proper subgroups are generalized Bassian, then it is also generalized Bassian itself, that is, if $G$ is an arbitrary nearly generalized Bassian group, then either $G$ is a quasi-cyclic group or $G$ is generalized Bassian of finite torsion-free rank. Moreover, we show that there is a complete description of the so-called {\it super Bassian} groups, that are those groups whose epimorphic images are all Basian, and give their full characterization. Also, we establish that the {\it hereditary} property of Bassian groups gives nothing new as it coincides with the ordinary Bassian property.
\end{abstract}
	
\maketitle
	
\vskip1.0pc
	
\section{Introduction and Main Facts}\label{intro}

Throughout the current paper, the term group will mean an additively written Abelian group. Our notation and terminology are at all standard and may be found in the books \cite{F1,F2,F3}. We will make extensive use of the notions of torsion-free rank and $p$-rank by using the corresponding notations $r_0(G)$ and $r_p(G)$, respectively as full details of these notions are given in \cite[Sections 16,35]{F1} or \cite[Sections 4.4, 5.6]{F3}. If $G$ is a mixed group, we will denote its torsion subgroup by $T(G)$ (and often we will simplify this abbreviation to $T$ if there is no possibility of ambiguity). The $p$-primary components of such a mixed group $G$ will be denote by $T_p(G)$ or $T_p$ when this is more convenient. We also adopt one slightly non-standard piece of terminology thus: a group $G$ is said to be a {\it genuine mixed group} if $G$ has both non-trivial torsion and torsion-free elements and $G$ is not splitting, i.e., in other words, the torsion subgroup of $G$ is {\it not} a direct summand of $G$. Thus, the groups that are not genuinely mixed fall into one of the three categories as follows: they are either torsion, torsion-free or splitting mixed. We also use the concept {\it an elementary $p$-group}, or sometimes {\it a $p$-elementary subgroup}, to denote a (possibly infinite) direct sum of cyclic groups of order $p$ and the name {\it elementary group} is used to denote a torsion group such that all its $p$-primary components are elementary $p$-groups.

Any other specific concepts will be explained as needed in what follows. For instance, let us recall that if $(\mathcal{P})$ is a property of groups, then a group $G$ is said to be {\it hereditarily $(\mathcal{P})$} if $G$ and all its subgroups have together the property $(\mathcal{P})$. In this section we wish to investigate
a weak variant of this concept. For simplicity of notation, we say that a group $G$ is {\it nearly $(\mathcal{P})$} if every proper subgroup of $G$ has property $(\mathcal{P})$; it is certainly {\it not} assumed a priory that $G$, itself, has the property $(\mathcal{P})$. Specifically, we intend to investigate which groups are nearly $(\mathcal{P})$ when $(\mathcal{P})$ is the property of being Bassian or generalized Bassian, respectively. Additionally, we shall examine the "super" property $(\mathcal{P})$ of Bassian groups in the sense that each
epimorphic image of the group is Bassian; this is certainly a stronger requirement than that of being Bassian. Thus, by what we have noted, our basic motivating tool in writing up the present article is to expand somewhat the corresponding results for Hopfian and co-Hopfian groups from \cite{GG}. We achieve that in Corollary~\ref{2}, Proposition~\ref{3}, Theorem~\ref{2new} and Theorems~\ref{DKdecomp}, \ref{chief}, respectively.

\medskip

The following two basic notions, that are in the focus of our present interest, appeared in \cite{CDG1} and \cite{CDG2}, respectively.

\medskip

An Abelian group $G$ is called {\it Bassian} if the existence of an injective homomorphism $G\to G/H$ for some subgroup $H$ of $G$ implies that $H=\{0\}$ and, more generally, that $G$ is {\it generalized Bassian} if the existence of an injective homomorphism $G\to G/H$ for some subgroup $H$ of $G$ implies that $H$ is a direct summand of $G$.

\medskip

Unfortunately, as already commented above, there is no a full description of the generalized Bassian groups so far. However, the following complete characterization of Bassian groups was obtained in \cite[Main Theorem]{CDG1}:

\medskip

{\bf Theorem} (Bassian). (i) {\it A reduced Abelian group $G$ is Bassian if, and only if, all of the ranks $r_0(G), r_p(G)$ (for $p$ a prime) are finite};

(ii) {\it A non-reduced Abelian group $G$ is Bassian if, and only if, it has the form $G=D\oplus R$, where $D$ is a finite dimensional $\Q$-vector space and $R$ is a reduced Bassian group}.

\medskip

It follows at once from these two necessary and sufficient conditions that a Bassian group is necessarily countable and that the finite direct sum of Bassian groups is again a Bassian group. Note also that a consequence of this classification is that the Bassian groups are hereditarily (see, e.g., \cite{GG} for both Hopfian and co-Hopfian groups): a subgroup of a Bassian group is necessarily Bassian. The converse is definitely {\it not} true since the quasi-cyclic groups $\qc$ are examples of non-Bassian groups all of whose proper subgroups are Bassian.

\medskip

Furthermore, a detailed analysis of the fundamental properties of generalized Bassian groups was given in \cite{CDG2} and \cite{DK}, respectively, although a full characterization result was {\it not} obtained there. In fact, it is {\it not} clarified yet whether a subgroup of a generalized Bassian groups is again generalized Bassian, and even finding a suitable approach how to attack this question is still problematic. However, some further advantage in this matter was made in the latter paper. Concretely, it was shown in \cite{DK} that {\it any generalized Bassian group must have finite torsion-free rank} (thus answering in the negative of a recent question posed in \cite{CDG2}), that {\it any generalized Bassian group is a direct sum of a Bassian group and an elementary group} and that {\it every subgroup of a generalized Bassian group is also a direct sum of a Bassian group and an elementary group}, as well as it was conjectured there that {\it the direct sum of a Bassian group and an elementary group has to be a generalized Bassian group} (compare also with our final Problem 3).

\medskip

\section{The Hereditary, Near and Super Bassian Properties}\label{heredit}

As mentioned in the Introduction, a group with a property $(\mathcal{P})$ is said to have the {\it hereditary $(\mathcal{P})$ property}, provided the whole group along with all of its proper subgroups possess the same property $(\mathcal{P})$; a slight variation of this concept, which we have named {\it nearly} is the following: group is said to have the {\it near $(\mathcal{P})$ property}, provided each of its proper subgroup possesses property $(\mathcal{P})$. We are  {\it not} assuming that a group which is nearly $(\mathcal{P})$ necessarily has the property $(\mathcal{P})$.

\subsection{Hereditary Bassian group}

First of all, we show that Bassian groups are, in fact, a familiar class of groups involving a hereditary condition.

\begin{proposition}\label{1} A  group is Bassian if, and only if, it is a hereditarily Hopfian group.
\end{proposition}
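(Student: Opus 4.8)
The plan is to prove the two implications separately; the forward implication is routine, while the converse carries the real content.

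\textbf{Bassian $\Rightarrow$ hereditarily Hopfian.} First I would record the elementary fact that every Bassian group is Hopfian: if $\varphi\colon G\to G$ is a surjective endomorphism and $H=\Ker\varphi$, then $\varphi$ induces an isomorphism $G/H\cong G$, so its inverse is an injective homomorphism $G\to G/H$, and the Bassian property forces $H=\{0\}$, i.e. $\varphi$ is an automorphism. Since subgroups of a Bassian group are again Bassian (a consequence of the characterization recalled above; see also \cite{GG}), every subgroup of $G$ is Bassian and hence Hopfian, so $G$ is hereditarily Hopfian.

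\textbf{Hereditarily Hopfian $\Rightarrow$ Bassian.} Here I would argue by contraposition, using the Bassian Theorem to reduce ``not Bassian'' to three concrete structural obstructions. Combining parts (i) and (ii) of that theorem, one checks that $G$ is Bassian precisely when $r_0(G)<\infty$, $r_p(G)<\infty$ for every prime $p$, and the maximal divisible subgroup of $G$ is torsion-free (equivalently, $G$ contains no copy of $\qc$). Hence if $G$ fails to be Bassian, at least one of the following holds: (a) $r_0(G)=\infty$; (b) $r_p(G)=\infty$ for some $p$; (c) $G$ has a subgroup isomorphic to $\qc$ for some $p$.

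In each case I would exhibit a non-Hopfian subgroup, which immediately shows that $G$ is not hereditarily Hopfian. In case (a) a maximal independent system of elements of infinite order generates a free subgroup of infinite rank, which is non-Hopfian via the shift endomorphism sending a basis $\{e_n\}_{n\ge1}$ by $e_1\mapsto 0$ and $e_{n+1}\mapsto e_n$. In case (b) the socle $G[p]$ is an infinite-dimensional $\Z/p\Z$-vector space, hence contains an infinite elementary $p$-subgroup $\bigoplus_{\aln}\Z/p\Z$, which is non-Hopfian by the same shift construction. In case (c) the subgroup $\qc$ itself is non-Hopfian, since multiplication by $p$ is surjective (by divisibility) yet has kernel $\qc[p]\cong\Z/p\Z\neq\{0\}$. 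This completes the contrapositive.

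The genuinely delicate step is the structural reformulation in the converse: one must verify that the two clauses of the Bassian Theorem really do amount to the three numbered conditions, in particular that the non-reduced case is captured exactly by ``divisible part torsion-free and of finite rank,'' so that the three obstruction subgroups listed above are exhaustive. Once that bookkeeping is in place, the remainder is merely a matter of naming the standard non-Hopfian groups, so I do not anticipate any serious difficulty beyond getting that reduction correct.
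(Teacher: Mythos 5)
Your proof is correct, but the converse direction takes a genuinely different route from the paper. The forward implication (Bassian $\Rightarrow$ hereditarily Hopfian) is identical in both: subgroups of Bassian groups are Bassian, and Bassian groups are Hopfian (you spell out the latter implication, which the paper only asserts). For the converse, however, the paper simply invokes the Goldsmith--Gong classification of hereditarily Hopfian groups from \cite{GG} -- such a group is an extension of a direct sum of finite primary groups by a finite rank torsion-free group -- and then observes that any group of this form is Bassian by the characterization in \cite{CDG1}. You instead argue by contraposition: you repackage the Main Theorem of \cite{CDG1} into the three conditions $r_0(G)<\infty$, $r_p(G)<\infty$ for all $p$, and no subgroup isomorphic to $\qc$ (this repackaging is correct -- the divisible part being torsion-free of finite rank is exactly what clause (ii) of the Bassian Theorem requires), and then in each failure mode you exhibit an explicit non-Hopfian subgroup: a free group of infinite rank, an infinite elementary $p$-group (each killed by the shift endomorphism), or $\qc$ itself (multiplication by $p$). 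What each approach buys: the paper's argument is shorter but uses the hard direction of the \cite{GG} classification as a black box; yours is self-contained modulo the Bassian Theorem, needing only standard examples of non-Hopfian groups, and in effect it reproves the easy half of the \cite{GG} result rather than citing it. Both are valid proofs; yours trades brevity for independence from the hereditarily Hopfian classification.
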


\begin{proof} If $G$ is a Bassian group, then, as observed in \cite{CDG1}, every subgroup of $G$ is also Bassian. Furthermore, since Bassian groups are also Hopfian, $G$ is then hereditarily Hopfian.
	
Conversely, if $G$ is a hereditarily Hopfian group, then, by the classification given in \cite{GG}, $G$ is an extension of a direct sum of finite primary groups by a finite rank torsion-free group. It thus follows from the characterization in \cite{CDG1} that $G$ is Bassian, as asserted.
\end{proof}

As an immediate consequence, we have the following surprising claim.

\begin{corollary}\label{2} A reduced group is hereditarily Bassian if, and only if, it is Bassian.
\end{corollary}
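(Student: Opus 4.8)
The plan is to prove Corollary~\ref{2} as a near-immediate consequence of Proposition~\ref{1} together with the hereditary behaviour of the Bassian class already established in the Bassian classification theorem. The key observation is that the notion ``hereditarily Bassian'' is, by definition, a \emph{stronger} condition than ``Bassian'' (it requires the property to pass to all subgroups as well), so one direction is formally trivial; the entire content lies in showing the reverse implication, namely that an ordinary reduced Bassian group is automatically hereditarily Bassian.

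First I would dispose of the easy direction: if $G$ is hereditarily Bassian, then in particular $G$ itself has the Bassian property, so $G$ is Bassian. No hypotheses beyond the definition are needed here, and reducedness is not even used.

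For the converse, I would invoke the fact, quoted in the excerpt immediately after the Bassian Theorem and attributed to \cite{CDG1}, that a subgroup of a Bassian group is necessarily Bassian. Thus if $G$ is reduced and Bassian, every subgroup $H\le G$ is Bassian, and hence $G$ together with all of its subgroups has the Bassian property; by definition this says exactly that $G$ is hereditarily Bassian. Concretely, one can run this through the rank criterion of Theorem~(Bassian)(i): since $G$ is reduced and Bassian, all the ranks $r_0(G)$ and $r_p(G)$ are finite, and because passing to a subgroup $H$ cannot increase any of these ranks, the ranks $r_0(H)$ and $r_p(H)$ are also finite, so $H$ is Bassian by the same criterion. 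Alternatively, and more in the spirit of Proposition~\ref{1}, one notes that a reduced Bassian group is hereditarily Hopfian, and the hereditary-Hopfian property is by its very formulation inherited by subgroups, which again forces every subgroup to be Bassian.

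I do not expect any genuine obstacle here, since the statement is essentially a repackaging of the already-recorded fact that the Bassian property is subgroup-closed on the reduced side. The only point requiring a modicum of care is keeping the reducedness hypothesis in view: the full Bassian Theorem handles non-reduced groups by splitting off a finite-dimensional $\Q$-vector space summand, and one should make sure the rank-monotonicity argument is applied in the reduced setting where the clean criterion of part~(i) is available, so that no subtlety about divisible summands of subgroups intrudes. Thus the ``surprise'' advertised in the text is simply that the a priori stronger hereditary condition collapses to the plain Bassian condition, which is exactly what the subgroup-closure of the class delivers.
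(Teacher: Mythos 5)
Your proof is correct and takes essentially the same route as the paper, which records the corollary as an immediate consequence of Proposition~\ref{1} and of the subgroup-closure of the Bassian class already noted in the Introduction (via the rank criterion of the classification theorem). Both your main argument (rank monotonicity under passage to subgroups, with reducedness preserved) and your alternative via hereditary Hopficity are precisely the intended reasoning.
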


\subsection{Nearly Bassian groups}

The next result gives us a complete characterization of nearly Bassian groups.

\begin{proposition}\label{3} An arbitrary group $G$ is nearly Bassian if, and only if, either $G$ is the quasi-cyclic group $\mathbb{Z}(p^{\infty})$ for some prime $p$, or $G$ is Bassian.
\end{proposition}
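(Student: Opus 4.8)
The plan is to prove the nontrivial implication, namely that a nearly Bassian group $G$ which is \emph{not} itself Bassian must be a quasi-cyclic group $\mathbb{Z}(p^{\infty})$; the reverse implication is immediate, since Bassian groups are hereditary (so every proper subgroup of a Bassian group is Bassian), while the proper subgroups of $\mathbb{Z}(p^{\infty})$ are the finite cyclic groups $\mathbb{Z}(p^{n})$, which are finite and hence Bassian, exactly as already observed in the Introduction. Throughout I would lean on the characterization quoted above in the form: $G$ is Bassian precisely when $r_0(G)<\infty$, every $r_p(G)<\infty$, and the maximal divisible subgroup of $G$ is torsion-free, equivalently $G$ has no direct summand isomorphic to $\mathbb{Z}(p^{\infty})$. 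In particular every Bassian group is countable.

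First I would reduce to the countable case. Since every Bassian group is countable, it suffices to show that an uncountable abelian group always possesses an uncountable proper subgroup: such a subgroup cannot be Bassian, contradicting the near Bassian hypothesis, so $G$ must be countable. To produce such a subgroup I split into two cases. If $G$ is not divisible, then $pG\neq G$ for some prime $p$, and the nonzero $\mathbb{F}_p$-vector space $G/pG$ has a subspace of codimension one; its preimage $H$ satisfies $[G:H]=p$, so $H$ is proper with $|H|=|G|$ uncountable. If $G$ is divisible, write $G=\mathbb{Q}^{(I)}\oplus\bigoplus_{p}\mathbb{Z}(p^{\infty})^{(J_p)}$; since each summand is countable while $G$ is uncountable, the index set is uncountable, and deleting a single summand yields a proper subgroup of the same uncountable cardinality.

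Next, assuming $G$ countable and not Bassian, I would rule out infinite ranks. If $r_0(G)$ were infinite, I would choose an infinite independent family $x_1,x_2,\dots$ of elements of infinite order; then $\langle x_2,x_3,\dots\rangle$ omits $x_1$, hence is a proper subgroup, yet has infinite torsion-free rank and so is not Bassian, a contradiction. Likewise, if some $r_p(G)$ were infinite, I would choose independent socle elements $y_1,y_2,\dots$ of order $p$; then $\langle y_2,y_3,\dots\rangle$ is proper and has infinite $p$-rank, again contradicting the hypothesis. Hence all the ranks $r_0(G)$ and $r_p(G)$ are finite.

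Finally, since $G$ has all finite ranks but is not Bassian, the only remaining obstruction in the characterization is a torsion divisible summand: writing $G=D\oplus R$ with $D$ the maximal divisible subgroup and $R$ reduced, the group $R$ has finite ranks and is therefore Bassian, so $D$ cannot be torsion-free; thus $D$ contains a summand $\mathbb{Z}(p^{\infty})$ and we may write $G=\mathbb{Z}(p^{\infty})\oplus G'$. The decisive step is then short: if $G'\neq 0$ then $\mathbb{Z}(p^{\infty})\oplus 0$ is a \emph{proper} subgroup of $G$ which is not Bassian, contradicting near Bassianness; therefore $G'=0$ and $G=\mathbb{Z}(p^{\infty})$, as required. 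I expect the main obstacle to be the countability reduction, where the structural dichotomy between divisible and non-divisible groups must be handled with care; once countability is secured, the rank computations and the final splitting argument fall out directly from the Bassian characterization.
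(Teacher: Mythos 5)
Your proof is correct and, at its core, follows the same route as the paper's: both rest on the quoted rank characterization of Bassian groups, rule out infinite $r_0(G)$ and $r_p(G)$ by exhibiting a proper subgroup of infinite rank (which cannot be Bassian), and then pin down $G \cong \mathbb{Z}(p^{\infty})$ by observing that a proper torsion divisible subgroup would contradict the near Bassian hypothesis. The only real difference is your opening countability reduction, which is correct but superfluous --- neither your rank arguments nor your final splitting step ever uses countability --- whereas the paper proceeds directly via the decomposition $G = d(T) \oplus X$ and runs the same ingredients as a three-case analysis.
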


\begin{proof} The sufficiency is straightforward: indeed, a proper subgroup of a quasi-cyclic group is finite and hence Bassian, while a subgroup of a Bassian group is again Bassian -- see, e.g., the discussion in the Introduction of \cite{CDG2}.
	
For the necessity, let $T$ denote the torsion subgroup of $G$ and $d(T)$ the maximal divisible subgroup of $T$. Clearly, $d(T)$ is a summand of $G$, so that $G=d(T) \oplus X$ for some subgroup $X$ of $G$. We consider three possible cases:
	
{\it Case} (i): $X = \{0\}$. Then $G$ is of the form $G = \bigoplus\limits_{p \in I} D_p$, where each $D_p$ is a divisible $p$-group and $I$ is a set of primes. If $\mid I\mid ~ > 1$, then $G$ has a proper subgroup $D_q$ for some prime $q$, which is, by hypothesis, Bassian -- contrary to \cite[Proposition 3.1]{CDG1}. We conclude that $G$ must be a divisible $p$-group for some prime $p$, and so an identical argument shows that it must have rank $1$. Thus, in Case (i), we obtain that $G \cong \mathbb{Z}(p^{\infty})$ for some prime $p$, as asserted.
	
{\it Case} (ii): $X \neq \{0\} \neq d(T)$. In this case, $d(T)$ is a proper subgroup of $G$ and hence Bassian - it is, however, impossible as observed in Case (i). So, this case cannot occur.
	
{\it Case} (iii): $d(T)=\{0\}$, that is, $G = X$. We claim that $G$ must be Bassian; for if not, then it follows from the classification of Bassian groups in \cite{CDG1} that at least one of the ranks $r_0(G), r_p(G)$ (for a prime $p$) is infinite. If any $r_p(G)$ is infinite, then the $p$-primary component of $G$, and hence $G$ itself, has a proper subgroup, $H$ say, with $r_p(H)$ infinite; but this is nonsense since $H$ would then also be Bassian. A similar argument shows that, if $r_0(G)$ is infinite, then again there is a proper subgroup $H$ of $G$ with $r_0(H)$ infinite, which is also untrue because $H$ would also be Bassian. Thus, in Case (iii), $G$ is necessarily Bassian, as required.
\end{proof}

\subsection{Super Bassian Groups}

In this subsection we investigate a property that is stronger than the property of being Bassian. In fact, our objective is to determine those groups having the property that every epimorphic image of the group is Bassian; such groups are, of course, Bassian but the converse manifestly fails: a non-reduced Bassian group will always have an epimorphic image which is a quasi-cyclic group, $\qc$, and the latter is not Bassian - see the Main Theorem of \cite{CDG1} quoted in the Introduction. However, a group which is reduced may have a divisible epimorphic image: indeed, Corner has constructed, for each positive integer $n \geq 2$, a torsion-free group of rank $n$ such that all its subgroups of rank $n-1$ are cyclic while the torsion-free quotient groups of rank $1$ are all divisible -- see, for example, \cite[Exercise 5, Section 4, Chapter 12]{F3} and, of course, every reduced unbounded $p$-group $G$ has a divisible epimorphic image of the form $G/B$, where $B$ is a basic subgroup of $G$.

Thus, motivated by these facts, we will make use of the following, somewhat {\it ad hoc} definition: A group $G$ is said to have property $(\frak{P})$ if no quasi-cyclic group is an epimorphic image of $G$.

In keeping with standard terminology used in relation to Hopfian groups (see, e.g., \cite{GG}), we make the following definition.

\begin{definition} A group $G$ is said to be {\it super Bassian} if every epimorphic image of $G$ is Bassian.
\end{definition}

Evidently, a super Bassian group is Bassian and it follows from our discussion above that a super Bassian group is necessarily reduced and also that $G$ has property $(\frak{P})$.

The classification we are seeking is given by our next result.

\begin{theorem}\label{2new} Let $G$ be an arbitrary group. The following three statements hold:
	
(i) if $G$ is torsion, then $G$ is super Bassian if, and only if, $G$ is reduced Bassian;
	
(ii) if $G$ is torsion-free, then $G$ is super Bassian if, and only if, $G$ is Bassian and has property $(\frak{P})$;
	
(iii) if $G$ is mixed, then $G$ is super Bassian if, and only if, $G$ is Bassian and has property $(\frak{P})$.
\end{theorem}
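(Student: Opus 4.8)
The plan is to dispatch all the ``only if'' directions at once and then concentrate on sufficiency, where the actual work lies. For necessity, observe that each forward implication is already contained in the remarks recorded just before the statement: a super Bassian group is Bassian (being one of its own epimorphic images), is reduced, and has property $(\frak{P})$. In case (i) this reads super Bassian $\Rightarrow$ reduced Bassian, and in cases (ii) and (iii) it reads super Bassian $\Rightarrow$ Bassian with property $(\frak{P})$, so nothing more is required for the forward directions.

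For sufficiency I would first isolate two soft facts. First, property $(\frak{P})$ passes to every epimorphic image, since an epimorphic image of $G/K$ is again an epimorphic image of $G$; hence if $G$ has $(\frak{P})$, so does each $G/K$. Secondly, any group $H$ with property $(\frak{P})$ is reduced: otherwise its maximal divisible subgroup $D$ is a nonzero direct summand, and a nonzero divisible group maps onto $\qc$ for some prime (if $D$ has nontrivial torsion part it has a summand $\qc$, and otherwise $D$ has a summand $\Q$, which maps onto $\qc$), contradicting $(\frak{P})$. Combining these, once $G$ is assumed Bassian with property $(\frak{P})$, every $G/K$ is automatically reduced, and $(\frak{P})$ also forces $G$ itself to be reduced; so in all three cases the Main Theorem of \cite{CDG1} leaves us with a reduced Bassian $G$, i.e. $r_0(G)$ and every $r_p(G)$ are finite.

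The crux is then a uniform bound on the primary ranks of quotients, and this is the step I expect to be the main obstacle. The Key Lemma I would prove is: if $F$ is torsion-free of finite rank $n$, then $r_p(F/K)\le n$ for every subgroup $K$ and every prime $p$. Setting $L=\{g\in F : pg\in K\}$, one has $(F/K)[p]=L/K$ and $pL\subseteq K\subseteq L$, so $L/K$ is an epimorphic image of $L/pL$; since $F$ is torsion-free, multiplication by $p$ is injective on $L$, and the standard argument (elements independent modulo $pL$ are $\Q$-independent) gives $\dim(L/pL)\le\operatorname{rank}(L)\le n$. Hence $r_p(F/K)=\dim(L/K)\le n$, and likewise $r_0(F/K)\le n$. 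Torsion-freeness is used decisively here, and this is exactly what lets a single finite-rank hypothesis control all the primary ranks of all quotients simultaneously.

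With these tools the three cases fall out. In (i), a torsion Bassian group is reduced (the non-reduced Bassian form $D\oplus R$ carries a nonzero $\Q$-summand), so $G=\bigoplus_p T_p$ with each $T_p$ a reduced $p$-group of finite rank, hence finite \cite{F1}; every quotient is $\bigoplus_p T_p/(T_p\cap K)$, again reduced with finite primary ranks, so Bassian. In (ii), $G$ is reduced torsion-free of finite rank $n$, and for any $K$ the quotient $G/K$ is reduced while $r_0(G/K)\le n$ and, by the Key Lemma, every $r_p(G/K)\le n$, whence $G/K$ is Bassian. In (iii), write $0\to T\to G\to F\to 0$ with $T=\bigoplus_p T_p$ (each $T_p$ finite, as in (i)) and $F=G/T$ torsion-free of rank $n$; for a subgroup $K$ this yields a short exact sequence $0\to T/(T\cap K)\to G/K\to F/\bar K\to 0$, where $\bar K$ is the image of $K$ in $F$. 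Since $p$-rank is subadditive on short exact sequences, $r_p(G/K)\le r_p\big(T/(T\cap K)\big)+r_p(F/\bar K)\le r_p(G)+n$, which is finite; together with $r_0(G/K)\le n$ and reducedness this makes $G/K$ Bassian. In all three cases every epimorphic image of $G$ is Bassian, i.e. $G$ is super Bassian.
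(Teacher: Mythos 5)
Your proof is correct, and it takes a genuinely different route from the paper's. The paper proves sufficiency in cases (ii) and (iii) by splitting on the type of the epimorphic image $X$ (torsion, torsion-free, mixed): torsion images are handled with basic subgroups and a boundedness argument, mixed images require an \emph{unpublished result of Corner} (that a basic subgroup $B_p$ of a $p$-group $X_p$ is an epimorphic image of $X_p$), and case (iii) is reduced to case (ii) via $G/T(G)$ together with a separate lemma stating that an extension of a torsion Bassian group by a Bassian group is Bassian. You instead run a uniform rank-counting argument: reducedness of every quotient comes for free because property $(\frak{P})$ passes to quotients and forces reducedness, and finiteness of all ranks of every quotient follows from your Key Lemma ($r_p(F/K)\le \operatorname{rank}(F)$ for $F$ torsion-free of finite rank, proved correctly via $pL\subseteq K\subseteq L$ and $\dim L/pL\le\operatorname{rank}L$) combined with subadditivity of $p$-rank on short exact sequences; then the Main Theorem of \cite{CDG1} applies directly. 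What your approach buys is a more elementary, self-contained proof with explicit bounds such as $r_p(G/K)\le r_p(G)+r_0(G)$, avoiding both Corner's unpublished result and basic-subgroup theory; what the paper's approach buys is structural information of independent interest, namely the classification of what the images look like subcase by subcase and the extension lemma (torsion Bassian by Bassian is Bassian), which it records as a standalone result.
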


\begin{proof} Observe that in each of the three cases to be considered necessity holds obviously, so we need only give the arguments guaranteeing sufficiency.
	
{\it Case} (i): As $G$ is reduced torsion and Bassian, then it has a primary decomposition of the form $G = \bigoplus\limits_{p \ {\rm prime}}G_p$ and each $p$-primary component is a finite $p$-group. But then every epimorphic image of $G$ must have a similar primary decomposition and thus is Bassian by the classification of Bassian groups. So, $G$ is super-Bassian.
	
{\it Case} (ii): Let $G$ be a torsion-free Bassian group satisfying property $(\frak{P})$. Then $G$ is reduced and of finite torsion-free rank. Let $\phi: G \twoheadrightarrow X$ be an arbitrary epimorphism from $G$ onto $X$. We consider the three possibilities for $X$: (a) $X$ is torsion; (b) $X$ is torsion-free; (c) $X$ is mixed.  	
	
{\it Subcase} (a): As $X$ is torsion, $X$ must be reduced since otherwise it would have a quasi-cyclic summand, $Y$ say, which is then an epimorphic image of $G$ contrary to our hypothesis that $G$ has property $\frak{P}$. Let $X$ have a primary decomposition $X = \bigoplus_p X_p$ and consider a surjection $\psi_p:G \twoheadrightarrow X_p$, where $\psi_p$ is the composition of $\phi$ with the canonical projection of $X$ onto $X_p$. If each $X_p$ is bounded then $X_p$ is finite since it is then a bounded image of a torsion-free group of finite rank. We claim all the components $X_p$ must be bounded. Suppose, for a contradiction, that some component $X_p$ is unbounded and let $B_p$ be a basic subgroup of $X_P$. Now the composition of $\psi_p$ with the canonical projection of $X_p$ onto $X_p/pX_p$ is a surjection and, as $X_p/pX_p$ is bounded, we have, as observed above, that $X_p/pX_p$ is finite. This is impossible since $X_p/pX_p \cong B_p/pB_p$ would then force $B_p/pB_p$ to be finite, contrary to our assumption that $X_p$ is unbounded. Hence, $X$ has the property that each of its primary components is finite and so $X$ is Bassian and reduced.

{\it Subcase} (b): If $X$ is torsion-free, then clearly $X$ is also torsion-free of finite rank and so Bassian.

{\it Subcase} (c): If $X$ is mixed then it must be reduced and $r_0(X)$ is finite. So, $X$ is an extension of the torsion group $T(X)$ by a torsion-free group of finite rank; in particular, $X$ is an extension of a torsion group by a countable torsion-free group. Let $T(X) = \bigoplus_p X_p$ be the primary decomposition of the torsion subgroup $T(X)$ of $X$ and let $B_p$ be a basic subgroup of $X_p$ for each prime $p$ in the decomposition. Then, by an unpublished result of Corner, $B_p$ is an epimorphic image of $X_p$, say $\eta_p :X_p \twoheadrightarrow B_p$. (The result of Corner has been used previously in \cite{GG} and \cite[Theorem 4.1]{GG1}; some details of his arguments may be found in Section 4 of the latter.) Then, the composition of $\eta_p$ with the surjections of $G$ onto $X_p$ yields an epimorphism of $G$ onto the torsion reduced group $B_p$. The argument in part (i) tells us that each $B_p$ is finite, whence each $X_p$ is also finite. Thus, for each prime $p$, we have that $r_p(X)$ is finite and, as $r_0(X)$ is also finite, we conclude that $X$ is Bassian.

Combining the three subcases we see that in case (ii), each epimorphic image $X$ of $G$ is Bassian, so $G$ is super Bassian, as claimed.

{\it Case} (iii): Here we are assuming that $G$ is mixed Bassian and has property $(\frak{P})$. Let $G \twoheadrightarrow X$ be an arbitrary epimorphism. We again consider the three subcases (d) $X$ is torsion; (e) $X$ is torsion-free; (f) $X$ is mixed.

{\it Subcase} (d): As before, $X$ is necessarily reduced. Set $Z = \phi(T(G)) \leq X$. Since each $p$-primary component of $T(G)$ is finite, the same holds for $Z$ and hence $Z$ is Bassian. Furthermore, $X/Z = \phi(G)/\phi(T(G))$ is an epimorphic image of $G/T(G)$, a torsion-free group of finite rank. Therefore, $G/T(G)$ satisfies property $(\frak{P})$ since, by assumption $G$ does. Thus, by what we established in part (ii) above, the group $X/Z$ is Bassian. So, we have an exact sequence $$0 \to Z \to X \to X/Z \to 0,$$ where $Z$ is torsion Bassian and $X/Z$ is also Bassian. It now follows from Lemma~\ref{3} below that $X$ is Bassian, as asserted.

{\it Subcase} (e): If $G$ is mixed Bassian, then $r_0(G)$ is finite and thus $X$, as being a torsion-free epimorphic image of $G$, is also of finite rank and Bassian.

{\it Subcase} (f): Again we have that $r_0(X)$ is finite and if $Y = \phi(T(G))$, then $Y$ is torsion Bassian. Exactly as in subcase (d), we have that $X/Y = \phi(G)/\phi(T(G))$ is an epimorphic image of the finite rank torsion-free group $G/T(G)$. Observe that $G/T(G)$ has property $(\frak{P})$ since $G$ has this property. It now follows from case (ii) that $X/Y$ is Bassian, so that $X$ is an extension of the torsion Bassian group $Y$ by the Bassian group $X/Y$. Applying Lemma~\ref{3} stated below, we see that $X$ is also Bassian, as wanted.

Combining the subcases (d),(e) and (f), we have established that if $G$ is mixed Bassian and has property $(\frak{P})$, then $G$ is super Bassian, as desired.
\end{proof}

It remains only to establish the cited above Lemma~\ref{3}, which is a simple extension of \cite[Proposition 2.3]{CDG1}.

\begin{lemma}\label{3} If $0 \to B \to G \to C \to 0$ is an extension of a torsion Bassian group $B$ by a Bassian group $C$, then $G$ is Bassian.
\end{lemma}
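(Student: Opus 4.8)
The plan is to verify that $G$ meets the conditions of the Bassian Theorem quoted in the Introduction, i.e.\ to exhibit a decomposition $G = D \oplus R$ in which $D$ is a (possibly trivial) finite-dimensional $\Q$-vector space and $R$ is reduced with all ranks $r_0(R)$ and $r_p(R)$ finite. First I would record the shape of the two end terms. Since $B$ is torsion and Bassian, the classification forces $B$ to be reduced (a non-reduced Bassian group has a torsion-free divisible summand, which a torsion group cannot possess) with every primary component $B_p$ finite. Since $C$ is Bassian, it has the form $C = D_C \oplus R_C$ with $D_C$ a finite-dimensional $\Q$-vector space and $R_C$ reduced Bassian; in particular $T(C) = T(R_C)$ is reduced and each primary component $T(C)_p$ is finite.

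The heart of the argument is to control the torsion of $G$. Note that $B \leq T(G)$ because $B$ is torsion, and that $T(G)/B$ embeds in $C$ and is torsion, whence $T(G)/B \hookrightarrow T(C)$. Passing to primary components (the $p$-component functor is exact on torsion groups) yields, for each prime $p$, a short exact sequence $0 \to B_p \to T(G)_p \to (T(G)/B)_p \to 0$. Here $B_p$ is finite and $(T(G)/B)_p \leq T(C)_p$ is finite, so $T(G)_p$ is an extension of a finite group by a finite group and is therefore finite. Consequently $T(G)$ is reduced with all $r_p(G) = r_p(T(G))$ finite, and $T(G)$ contains no quasi-cyclic subgroup.

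It then remains to assemble the pieces. Torsion-free rank is additive on the given short exact sequence, so $r_0(G) = r_0(B) + r_0(C) = r_0(C)$ is finite. Splitting off the maximal divisible subgroup, write $G = d(G) \oplus R$ with $R$ reduced; since the torsion part of $d(G)$ is the divisible part of $T(G)$, which vanishes by the previous step, $d(G)$ is torsion-free divisible, hence a $\Q$-vector space of dimension at most $r_0(G)$, while $R$ is reduced with $r_0(R) \leq r_0(G)$ and each $r_p(R) = r_p(G)$ finite. Thus $R$ is reduced Bassian and $d(G)$ is a finite-dimensional $\Q$-vector space, so $G$ is Bassian by the classification. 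The step I expect to be the main obstacle is precisely the torsion analysis of the middle paragraph: the whole point is to rule out a quasi-cyclic summand of $G$ (which would make $G$ non-Bassian), and this is exactly where the hypothesis that $B$ is torsion is used, through the finite-by-finite computation on primary components.
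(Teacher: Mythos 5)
Your proof is correct. The core of it coincides with the paper's: the paper chooses $H \leq G$ with $H/B = T(C)$ (this pullback is in fact exactly $T(G)$, the group you analyse), and shows each of its primary components is an extension of a finite group by a finite group, hence finite --- precisely the computation in your middle paragraph. Where you genuinely diverge is in the finishing step. The paper, having shown the torsion group $H$ is Bassian, invokes \cite[Proposition 2.3]{CDG1} (an extension of a torsion Bassian group by a torsion-free group of finite rank is Bassian), applied to $0 \to H \to G \to C/T(C) \to 0$. You instead bypass that extension lemma and verify the hypotheses of the Main Theorem of \cite{CDG1} (quoted in the Introduction) directly: additivity of torsion-free rank gives $r_0(G) = r_0(C) < \infty$, and since $T(G)$ is reduced, the maximal divisible subgroup $d(G)$ is torsion-free, hence a finite-dimensional $\Q$-vector space, so $G = d(G)\oplus R$ has the required shape. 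Your route is more self-contained relative to this paper --- it needs only the classification theorem already stated in the Introduction plus standard facts about rank and divisible subgroups --- at the cost of being somewhat longer; the paper's route is shorter but outsources the extension-theoretic step to \cite{CDG1}.
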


\begin{proof} Choose $H \leq G$ such that $H/B = T(C)$, the torsion subgroup of $C$. Then, we have an exact sequence
$$0 \to H \to G \to G/H \cong C/T(C) \to 0.$$ Since $C$ is Bassian, the factor-group $C/T(C)$ is torsion-free of finite rank and so, in view of \cite[Proposition 2.3]{CDG1}, it suffices to show that $H$ is Bassian since it is certainly torsion. To that goal, let $H = \bigoplus H_p$ be the primary decomposition of $H$ and note that, if $B_p$ is the corresponding $p$-primary component of $B$, then $B_p \leq H_p$ and $H_p/B_p \cong C_p$, where the latter is the $p$-primary component of $T(C)$. Since $B, T(C)$ are both Bassian, the subgroups $B_p, C_p$ are finite giving that each primary component of $H$ is also finite and so $H$ is Bassian, as expected.
\end{proof}

The requirement in Theorem~\ref{2} above that $G$ have property ($\frak{P}$) is a little unsatisfactory, because it is not immediate how to determine this from the internal structure of $G$. Fortunately, when a group is Bassian, we have the following helpful equivalence.

\begin{proposition} Let $G$ be a Bassian group, then $G$ has property ($\frak{P}$) if, and only if, the torsion-free quotient $G/T(G)$ has property ($\frak{P}$).
\end{proposition}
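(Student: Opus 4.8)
The plan is to prove the two implications separately, noting in advance that only the reverse direction actually uses that $G$ is Bassian. For the forward implication I would observe that the passage from $G$ to $G/T(G)$ can only destroy quotients, not create them: every epimorphic image of $G/T(G)$ is also an epimorphic image of $G$, obtained by precomposing with the canonical projection $G \twoheadrightarrow G/T(G)$. Hence, if no quasi-cyclic group is a quotient of $G$, then certainly none is a quotient of $G/T(G)$. This half needs no hypothesis on $G$ whatsoever.

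For the reverse implication I would argue by contraposition. So suppose $G$ fails property $(\frak{P})$ and fix an epimorphism $\phi\colon G \twoheadrightarrow \qc$ for some prime $p$; the goal is to manufacture a comparable epimorphism out of $G/T(G)$. The only obstruction to descending $\phi$ to $G/T(G)$ is the image $\phi(T(G))$, so the first step is to control it. Since $\phi(T(G))$ is a torsion subgroup of the $p$-group $\qc$, each component $T_q(G)$ with $q\neq p$ is sent to $0$, and therefore $\phi(T(G)) = \phi(T_p(G))$.

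The key step, and the only place where being Bassian enters, is to show that $\phi(T_p(G))$ is finite. Here I would use that subgroups of Bassian groups are Bassian (as recorded in the Introduction), so that $T(G)$ is a torsion Bassian group; by the classification of Bassian groups its primary components $T_p(G)$ are all finite. Consequently $F := \phi(T_p(G))$ is a finite subgroup of $\qc$. Now compose $\phi$ with the canonical map $\qc \twoheadrightarrow \qc/F$; since $\qc/F \cong \qc$ for any finite subgroup $F$, this produces an epimorphism $\psi\colon G \twoheadrightarrow \qc$ with $\psi(T(G)) = 0$. Thus $\psi$ factors through $G/T(G)$, yielding an epimorphism $G/T(G) \twoheadrightarrow \qc$ and showing that $G/T(G)$ fails property $(\frak{P})$, which is exactly the contrapositive we want.

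I expect the substantive point to be precisely the finiteness of the primary components $T_p(G)$: everything else is formal manipulation of quotient maps. The decisive structural inputs are, first, the fact that a torsion Bassian group has finite primary components, and second, the isomorphism $\qc/F \cong \qc$ for finite $F$, which is what allows one to absorb the finite torsion image and descend $\phi$ all the way to the torsion-free quotient. No genuine difficulty should arise beyond these two observations.
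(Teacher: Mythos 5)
Your proposal is correct and follows essentially the same route as the paper: both arguments hinge on the finiteness of $\phi(T_p(G))$ (from the Bassian hypothesis), the isomorphism $\qc/C \cong \qc$ for finite $C$, and descending the resulting epimorphism to $G/T(G)$. The only difference is presentational — you phrase the reverse direction as an explicit composition $G \twoheadrightarrow \qc \twoheadrightarrow \qc/F$ factoring through $G/T(G)$, while the paper phrases it as $\phi(G)/\phi(T(G)) \cong \qc$ being an epic image of $G/T(G)$.
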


\begin{proof} If $G$ has property ($\frak{P}$), then it is clear that $G/T(G)$ has the same property even when $G$ is not Bassian.
	
Conversely, assume the contrary that $G$ does not have property ($\frak{P}$), and let $\phi: G \twoheadrightarrow \qc$ be a surjection onto some quasi-cyclic group $\qc$. If $T_p$ denotes the $p$-primary component of $G$, then $\phi(T(G)) = \phi(T_p)$ is finite since $G$ is Bassian. Thus, $\phi(T(G)) = C$, where $C$ is a finite cyclic subgroup of $\qc$. Hence, $$\phi(G)/\phi(T(G)) = \qc/C \cong \qc.$$ But $\phi(G)/\phi(T(G))$ is an epic image of $G/T(G)$, so that there is an epimorphism $G/T(G) \twoheadrightarrow \qc$ and, consequently, $G/T(G)$ does not have property ($\frak{P}$), as required.
\end{proof}

It is also worthwhile noticing that, for a finite rank torsion-free group $G$, the following are equivalent (see \cite{DK} as well):

\medskip

(a) $G$ has no homomorphic images that are quasi-cyclic, i.e. $G$ has property $(\frak{P})$;

(b) $F$ is a free subgroup of $G$ such that $G/F$ has finite $p$-torsion for all primes $p$;

(c) For every free subgroup $F$ of $G$ such that $G/F$ is torsion, $G/F$ has finite $p$-torsion for all primes $p$;

(d) For every prime $p$, the localization $G_{(p)}$ is a free $\mathbb{Z}_{(p)}$-module (i.e., it is locally free).

\medskip

We end our work in this section with the following interesting question:

\medskip

\noindent{\bf Problem 1}. Describe the structure of hereditary generalized Bassian groups and super generalized Bassian groups by finding a complete structural characterization of them.

\section{Nearly Generalized Bassian Groups}\label{ngbg}

In this section, we consider the problem analogous to that addressed in Proposition \ref{3} in Section \ref{heredit}, where Bassian groups are replaced by generalized Bassian groups. Thus, we shall say that a group $G$ is {\it nearly generalized Bassian} if every proper subgroup of $G$ is generalized Bassian. Before proceeding to consideration of this problem, we present a brief review of known material about generalized Bassian groups. Fuller details and proofs may be found in \cite{CDG2} and \cite{DK}, respectively.

\medskip

$\bullet$ A generalized Bassian group has finite torsion-free rank.

\medskip

Since this fact is very important for our further presentation, we shall give an independent and more conceptual verification by modifying some of the arguments used in \cite{CDG2}.

To that purpose, assume that $G$ is generalized Bassian and is not torsion. If $G$ is torsion-free, then it necessarily has finite rank by \cite[Proposition 2.8]{CDG2}. If $G$ is splitting mixed, then again it is of finite torsion-free rank by \cite[Proposition 2.11]{CDG2}.
 	
So, consider the case where $G$ is genuine mixed and assume, for a contradiction, that $r_0(G) = \kappa$ is infinite.
However, we know from \cite[Proposition 3.3]{CDG2} that if $T$ is the torsion subgroup of $G$, then $\kappa <\mid T\mid$. Furthermore, $T = \bigoplus_p T_p$, where each $T_p$ has the form $T_p E_p \oplus F_p$, with $E_p$ an elementary $p$-group and $F_p$ a finite $p$-group. It follows from Remark 3.4 in \cite{CDG2} that $\mid T_p\mid > r_0(G)$ for infinitely many primes $p$, so that infinitely many of the $E_p$ have $p$-rank $> \kappa$.
 	
Since $G/T$ is torsion-free of infinite rank $\kappa$, there is a subgroup $H$ of $G$ with $G = H + T$ and $\mid H\mid = \kappa$. Set $B = H + \bigoplus+p  F_p$, so that $\mid B\mid$ is also $\kappa$. Now, $\bigoplus_p F_p \leq T \cap B \leq T = \bigoplus_p F_p \oplus \bigoplus E_p$, so there is a subgroup $D \leq \bigoplus_p E_p$ with $T = (T \cap B) \oplus D$. But $B \cap D \leq T \cap B \cap D = \{0\}$ and so $$G = H + T \leq B + T = B + (T \cap B) + D \leq G,$$ hence $G = B + D = B \oplus D$.
 	
Since $G$ is generalized Bassian and $B$ is a summand of $G$, then $B$ is also generalized Bassian utilizing \cite[Lemma 2.3]{CDG2}. Note that, as $D$ is torsion, $r_0(G) = r_0(B) = \kappa$. However, $B = H + \bigoplus_p F_p$, so our assumption that $\kappa$ is infinite yields $\mid B\mid = \mid H\mid = \kappa$, and thus it follows that the torsion subgroup $t(B)$ of $B$ must have cardinality $\leq \kappa$. This means that $\mid t(B)\mid \leq \kappa = r_0(B)$ contrary to \cite[Proposition 3.3]{CDG2}. This contradiction forces at once $r_0(G)$ to be finite when $G$ is genuine mixed, thereby establishing the result after all, as asked for.

\medskip

$\bullet$ If $G$ is generalized Bassian, then the torsion subgroup  $T$ of $G$ is of the form $T = \bigoplus_p T_p$ where each $T_p$ is a $p$-primary group of the form $T_p = F_p \oplus E_p$ where $F_p$ is a finite $p$-group and $E_p$ is an elementary $p$-group.

\medskip

We note now a simple result that gives an initial insight into the structure of generalized Bassian groups.

\begin{proposition} If $G$ is a genuinely mixed group which is generalized Bassian, then $G$ is an extension of an elementary group by a Bassian group.
\end{proposition}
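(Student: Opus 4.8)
The plan is to exhibit the required extension explicitly by factoring out the elementary part of the torsion subgroup. Recall from the structural facts cited above that the torsion subgroup has the form $T = \bigoplus_p T_p$ with $T_p = F_p \oplus E_p$, where $F_p$ is a finite $p$-group and $E_p$ is an elementary $p$-group. Setting $F := \bigoplus_p F_p$ and $E := \bigoplus_p E_p$, we obtain $T = F \oplus E$, where $E$ is an elementary group and $F$ is a torsion group all of whose primary components are finite. First I would observe that $F$ is itself Bassian, since $r_0(F) = 0$ and each $r_p(F) = r_p(F_p) < \infty$, so $F$ meets the Bassian criterion recalled in the Introduction. The candidate extension is then $0 \to E \to G \to G/E \to 0$, and the entire task reduces to proving that the quotient $B := G/E$ is Bassian.

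The first step in analysing $B$ is to pin down its torsion subgroup. Since $E \leq T$ and $G/T$ is torsion-free, an element $x + E$ is torsion in $G/E$ exactly when $x \in T$; hence $T(B) = T/E \cong F$. In particular $T(B)$ is reduced and each of its primary components is finite. From this I would read off the two rank conditions required by the Bassian classification: because $E$ is torsion we have $r_0(B) = r_0(G)$, which is finite since $G$, being generalized Bassian, has finite torsion-free rank; and because every element of order $p$ in $B$ lies in $T(B) \cong F$, we get $r_p(B) = r_p(F_p) < \infty$ for every prime $p$.

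It remains to control the divisible part of $B$, and this is the step I expect to be the main obstacle, since a quotient can in principle acquire divisibility that is invisible in $G$. The key leverage here is that $T(B) \cong F$ is \emph{reduced}: the torsion part of the maximal divisible subgroup $d(B)$ is a divisible torsion group contained in $T(B)$, hence must vanish, so $d(B)$ is torsion-free. Consequently $d(B)$ is a $\Q$-vector space of dimension at most $r_0(B) < \infty$. Writing $B = d(B) \oplus R$ with $R$ reduced, the complement $R$ then has finite torsion-free rank and finite $p$-ranks, so it is reduced Bassian; by part (ii) of the Bassian Theorem quoted in the Introduction, it follows that $B = G/E$ is Bassian. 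This yields the desired presentation of $G$ as an extension of the elementary group $E$ by the Bassian group $B$, completing the argument.
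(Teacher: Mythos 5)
Your proof is correct, and it reaches the same extension as the paper --- both arguments split the torsion subgroup as $T = E \oplus F$ with $E = \bigoplus_p E_p$ elementary and $F = \bigoplus_p F_p$ a direct sum of finite $p$-groups, and both then exhibit $0 \to E \to G \to G/E \to 0$ as the desired extension. Where you diverge is in proving that $G/E$ is Bassian. The paper instead writes down the exact sequence $0 \to T/E \to G/E \to G/T \to 0$, notes that $T/E \cong F$ is torsion Bassian and that $G/T$ is torsion-free of finite rank (hence Bassian), and then simply cites the closure result of Chekhlov--Danchev--Goldsmith (Proposition 2.3 of \cite{CDG2}) that an extension of a torsion Bassian group by a Bassian group is Bassian. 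You avoid that citation entirely and verify the classification criteria for $G/E$ by hand: you identify $T(G/E) = T/E \cong F$, deduce finiteness of $r_0(G/E)$ and of every $r_p(G/E)$, and --- the step the cited proposition hides --- you rule out unwanted divisibility in the quotient by observing that the torsion part of $d(G/E)$ sits inside the reduced group $F$, so $d(G/E)$ is a finite-dimensional $\Q$-vector space and part (ii) of the Bassian Theorem applies. The paper's route is shorter because it leans on existing machinery; yours is self-contained modulo the Main Theorem of \cite{CDG1} quoted in the Introduction, and it makes explicit exactly why passing to the quotient $G/E$ cannot create quasi-cyclic subgroups, which is a genuine concern (quotients of reduced groups can be divisible) that your argument correctly neutralizes via the reducedness of $T(G/E)$.
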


\begin{proof}
As noted above, each $p$-primary component of $G$ is of the form $T_p(G)  = F_p \oplus E_p$, where $F_p$ is a finite $p$-group and $E_p$ is an elementary $p$-group. Furthermore, $r_0(G)$ is finite. Let $E$ be the direct sum over the various primes $p$ of the elementary groups $E_p$, and let $F$ be the corresponding direct sum of the finite $p$-groups $F_p$; thus the torsion subgroup $T$ of $G$ has the form $T = E \oplus F$.
	
Consider now the group $G/E$: we have an exact sequence $$ 0 \to T/E \to G/E \to G/T \to 0.$$ Since $T/E \cong \bigoplus_p F_p$ and each $F_p$ is a finite $p$-group, we get that $T/E$ is a torsion Bassian group. Furthermore, $G/T$ is torsion-free of finite rank, and hence is Bassian. It follows from \cite[Proposition 2.3]{CDG2} that $G/E$ is Bassian. Thus, $G$ is an extension of the elementary group $E$ by the Bassian group $G/E$, as stated.
\end{proof}

In fact, Danchev and Keef have shown in \cite{DK} the more general fact that a generalized Bassian group is actually a split extension of an elementary group by a Bassian group. Their arguments are developed in the context of their deep investigations into mixed Abelian groups with bounded $p$-torsion. We present here another version of this result based on the results established in \cite{CDG2}.

Recall that in \cite{CDG2}, the class $\mathcal{P}$ of groups $G$ was considered, where $G \in \mathcal{P}$ if there
is an infinite set of primes $\Pi$ with $T \leq G \leq \bar{T}$, where $T$ is the torsion subgroup of $G$ and $T$ is a direct sum of $p$-primary bounded components $T_p$, $T = \bigoplus_{p \in \Pi} T_p$ and $\bar{T} = \prod_{p \in \Pi} T_p$. In \cite[Corollary 3.14]{CDG2} a decomposition of a generalized Bassian group in $\mathcal{P}$ was given: $G = A \oplus H$, where $A$ is torsion and each $p$-primary component of $A$ is a direct sum of an elementary $p$-group and a finite $p$-group, while $H$ is a Bassian group. An examination of the proof shows that the generalized Bassian property od $G$ was used only for two purposes (i) to show that $r_0(G)$ was finite and (ii) that $A$ had the desired decomposition with primary components of the form \lq\lq elementary plus finite\rq\rq. Thus, in fact, an extended version of \cite[Corollary 3.14]{CDG2} holds as follows.

\begin{proposition}\label{elplusb} Suppose that $G \in \mathcal{P}$ with $r_0(G)$ finite, and each $p$-primary component $T_p$ of $G$ has the form $T_p = E_p \oplus S_p$, where $E_p$ is an elementary $p$-group and $S_p$ is a finite $p$-group. Then $G$ has a decomposition $G = A \oplus H$ with $A$ torsion and having primary components of the form $E'_p \oplus F_p$ with $E'_p$ elementary and $F_p$ finite, and $H$ Bassian.
\end{proposition}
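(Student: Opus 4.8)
The plan is to run the proof of \cite[Corollary 3.14]{CDG2} essentially verbatim. By the discussion immediately preceding the statement, the generalized Bassian hypothesis enters that argument at exactly two points, and both are now supplied directly as hypotheses: the finiteness of $r_0(G)$, and the fact that each primary component splits as $T_p = E_p \oplus S_p$ with $E_p$ elementary and $S_p$ finite. So the strategy is first to reproduce the construction of the summand $A$ and its complement $H$ from that proof, and then to verify that no further, hidden appeal to $G$ being generalized Bassian survives once these two facts are granted outright.

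Concretely, I would set $E = \bigoplus_p E_p$ and $S = \bigoplus_p S_p$, so that $T = E \oplus S$ with $E$ elementary and $S$ having finite primary components, view $G$ inside $\bar{T} = \prod_p T_p$, and use the finiteness of $r_0(G) = n$ to fix finitely many elements of $G$ representing a maximal independent set modulo $T$. The construction then assembles the torsion summand $A$ out of the bulk of the elementary socles $E_p$, arranged so that the complement $H$ keeps only a finite $p$-rank at each prime together with the torsion-free rank $n$. Since $G$ is reduced, being squeezed between $T$ and the reduced group $\bar{T} = \prod_p T_p$ (a product of groups of finite exponent), so is $H$; moreover $r_0(H) = r_0(G) = n$ is finite and, by construction, every $r_p(H)$ is finite, whence $H$ is Bassian by the classification of Bassian groups quoted in the Introduction. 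The primary components of $A$ inherit the ``elementary plus finite'' shape from those of $T$.

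The step demanding care, and where I expect the main obstacle to lie, is the honest audit of the original argument: I must confirm that the production of $A$ as a genuine direct summand of $G$, and the verification that each $r_p(H)$ is finite, rest only on membership in $\mathcal{P}$ together with the two supplied facts, and nowhere covertly on $G$ itself being generalized Bassian. The real technical friction is the interplay between the finite torsion-free rank and the elementary components: elements of $G$ whose $E_p$-coordinates are nonzero for infinitely many $p$ couple the torsion-free part to the elementary socles, and one must show these can be neutralised by subtracting elements of $A$ so that the splitting $G = A \oplus H$ goes through with $A$ an actual summand. Granting the decomposition produced by the proof of \cite[Corollary 3.14]{CDG2} under these weakened hypotheses, the conclusion follows at once.
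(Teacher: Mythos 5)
Your proposal matches the paper's treatment exactly: the paper offers no independent proof of this proposition, but justifies it precisely by the audit you describe, namely that in the proof of \cite[Corollary 3.14]{CDG2} the generalized Bassian hypothesis is invoked only to obtain the finiteness of $r_0(G)$ and the ``elementary plus finite'' form of the primary components, both of which are now assumed outright. Your additional sketch of the construction of $A$ and $H$ is consistent with that argument, so the proposal is correct and takes essentially the same route.
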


Now, consider an arbitrary genuinely mixed group $G$ which is generalized Bassian. Then, certainly each primary component of $G$ has the form \lq\lq elementary plus finite\rq\rq \ , so that the torsion subgroup of $G$ has the form $T = E \oplus S$, with $E$ elementary and $S$ a direct sum over primes $p$ of finite $p$-groups. Therefore, we have an exact sequence $$0 \to T \to G \to G/T \leq D \to 0,$$ where $D$ is a finite dimensional $\Q$-space. Applying Proposition 24.6 in \cite{F1} (see also Exercise (6) in \cite{F3}), we obtain a group $G'$ with $$0 \to T \to G' \to D \to 0;$$ note that $G' \in \mathcal{P}$ and has the same torsion subgroup as $G$. Thus, Proposition \ref{elplusb} enables us to get a decomposition $G' = A \oplus H$ with $A$ torsion and $H$ Bassian. Furthermore, $A$ has the form $A = E \oplus F$ with $E$ elementary and $F$ is a direct sum over primes $p$ of finite $p$-groups. Absorbing $F$ into $H$, we write that $G' = E \oplus H'$, where $H' = F \oplus H$. Since $H$ is Bassian and each primary component of $F$ is finite, it now follows easily that $H'$ is Bassian, as required.

Finally, observe that as $E \leq T$, modularity gives us that $$G = G \cap (E \oplus H') = E \oplus (G \cap H');$$ since Bassian groups are hereditarily Bassian - see, for example, the comment in the Introduction to \cite{CDG2} - it follows that $H_0 = G \cap H'$ is Bassian, so that $G = E \oplus H_0$ with $E$ elementary and $H_0$ Bassian.

Thus, we have established the following statement from \cite{DK}.

\begin{theorem}\label{DKdecomp} If $G$ is a genuine mixed group which is generalized Bassian, then $G$ has a decomposition of the form $G = E \oplus H$ with $E$ elementary and $H$ Bassian.
\end{theorem}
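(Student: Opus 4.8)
The plan is to realize the genuinely mixed generalized Bassian group $G$ as a subgroup of a group $G'$ lying in the class $\mathcal{P}$, to apply the hypothesis-weakened decomposition of Proposition~\ref{elplusb} to $G'$, and then to pull the resulting splitting back down to $G$ by a modularity argument. First I would record the structural data already available: since $G$ is generalized Bassian, $r_0(G)$ is finite and, by the second bullet point above, each primary component of the torsion subgroup $T=T(G)$ has the form $T_p=E_p\oplus F_p$ with $E_p$ an elementary $p$-group and $F_p$ a finite $p$-group. In particular every $T_p$ is bounded, and $T=E\oplus S$ where $E=\bigoplus_p E_p$ is elementary and $S=\bigoplus_p F_p$ is a direct sum of finite primary groups.

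Next I would build $G'$. Because $r_0(G)$ is finite, the quotient $G/T$ is torsion-free of finite rank and so embeds in a finite-dimensional $\Q$-space $D$. Starting from the exact sequence $0\to T\to G\to G/T\to 0$ and using Proposition~24.6 of \cite{F1} (pushing the extension out along $G/T\hookrightarrow D$), I obtain a group $G'\supseteq G$ fitting into $0\to T\to G'\to D\to 0$, with $T(G')=T$ since $D$ is torsion-free. As the torsion components $T_p$ are bounded and $G'/T\cong D$ is divisible, $G'$ belongs to the class $\mathcal{P}$. Now Proposition~\ref{elplusb} applies to $G'$, its only inputs being that $r_0(G')=r_0(G)$ is finite and that the primary components are ``elementary plus finite''; this yields $G'=A\oplus H$ with $A$ torsion of the stated component form and $H$ Bassian.

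To descend to $G$, I would first absorb the finite part of $A$ into $H$: writing $A=E\oplus F$ with $E$ elementary and $F$ a direct sum of finite primary groups, set $H'=F\oplus H$, so that $G'=E\oplus H'$ and $H'$ is Bassian because adjoining finite primary components to a Bassian group preserves Bassianness (via the rank characterization). Since $E\le T\le G\le G'$, the Dedekind modular law gives $G=G\cap(E\oplus H')=E\oplus(G\cap H')$. Finally, $G\cap H'$ is a subgroup of the Bassian group $H'$, hence Bassian because the Bassian property is hereditary (see the Introduction and \cite{CDG2}). Setting $H_0=G\cap H'$ gives the desired $G=E\oplus H_0$.

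The step I expect to be the crux is the construction and placement of $G'$ in the class $\mathcal{P}$: one must verify that the pushout $G'$ really has torsion subgroup exactly $T$ and sits inside $\bar T=\prod_p T_p$ with divisible torsion-free quotient, so that the purely structural hypotheses of Proposition~\ref{elplusb} are met without re-invoking the generalized Bassian property. Once $G'\in\mathcal{P}$ is secured, the decomposition, the absorption of the finite summand, and the modularity descent are all routine.
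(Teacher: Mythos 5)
Your proposal is correct and takes essentially the same route as the paper's own argument: pushing out the extension $0 \to T \to G \to G/T \to 0$ along $G/T \hookrightarrow D$ via Proposition~24.6 of \cite{F1} to get $G' \in \mathcal{P}$, applying Proposition~\ref{elplusb} to $G'$, absorbing the finite summand into the Bassian part, and descending to $G$ by the modular law together with the hereditary Bassian property. The point you flag as the crux (that $G'$ genuinely lies in $\mathcal{P}$) is treated in the paper with the same brevity, so there is nothing missing relative to the paper's standard of detail.
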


It follows from the classification of torsion, torsion-free and mixed generalized Bassian groups - see, for instance, Proposition 2.8, Theorem 2.10 and Proposition 2.11 in \cite{CDG2} - that an \lq\lq elementary plus Bassian\rq\rq \  decomposition also holds in each of these three cases. So, we derive the following consequence.

\begin{corollary} If $G$ is an arbitrary generalized Bassian group, then $G$ has a decomposition $G = E \oplus H$, where $E$ is elementary and $H$ is Bassian.
\end{corollary}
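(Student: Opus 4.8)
The corollary asserts that an arbitrary generalized Bassian group $G$ admits a decomposition $G = E \oplus H$ with $E$ elementary and $H$ Bassian. The plan is to split into the mutually exhaustive cases determined by the classification of $G$ (torsion, torsion-free, splitting mixed, genuinely mixed) and verify the decomposition in each. The genuinely mixed case is already settled by Theorem~\ref{DKdecomp}, so the work reduces to confirming that the ``elementary plus Bassian'' shape persists in the remaining three cases.

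First I would treat the torsion-free case: here $G$ has finite rank (by the first bulleted fact, or by \cite[Proposition 2.8]{CDG2}), so $G$ is itself Bassian and we take $E = \{0\}$, $H = G$. Next, the torsion case: by \cite[Theorem 2.10]{CDG2} a torsion generalized Bassian group is a direct sum $E \oplus S$ of an elementary group $E$ and a group $S$ whose primary components are all finite; the latter is Bassian by the classification of Bassian groups, so the decomposition holds with $H = S$. For the splitting mixed case, \cite[Proposition 2.11]{CDG2} gives finite torsion-free rank and the torsion subgroup again has ``elementary plus finite'' primary components; since $G$ splits as $T \oplus K$ with $K$ torsion-free of finite rank, I would peel off the elementary part $E$ from $T$, absorb the finite-per-prime remainder into $K$, and observe (exactly as in the paragraph preceding Theorem~\ref{DKdecomp}) that the complement to $E$ is Bassian because a torsion-free finite-rank group together with finitely-generated-per-prime torsion has all ranks $r_0, r_p$ finite.

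The only genuinely new content over Theorem~\ref{DKdecomp} is bookkeeping across the three degenerate cases, and the main obstacle I anticipate is purely organizational rather than substantive: ensuring that in the splitting-mixed case the ``finite $p$-group'' summands $F_p$ can be absorbed into the Bassian complement without destroying the Bassian property. This is handled exactly as in the discussion above, where it is noted that adjoining a direct sum of finite $p$-groups (one per prime) to a Bassian group keeps all the ranks $r_p$ finite and leaves $r_0$ unchanged, so the result is still Bassian by the Main Theorem of \cite{CDG1}. Assembling the four cases then yields the stated decomposition $G = E \oplus H$ in full generality, completing the argument.
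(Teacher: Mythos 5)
Your proposal is correct and follows essentially the same route as the paper: the paper also disposes of the genuinely mixed case via Theorem~\ref{DKdecomp} and then appeals to the classification results of \cite{CDG2} (Proposition 2.8, Theorem 2.10, Proposition 2.11) to obtain the ``elementary plus Bassian'' decomposition in the torsion, torsion-free and splitting mixed cases. You merely spell out the bookkeeping (taking $E=\{0\}$ in the torsion-free case, absorbing the finite-per-prime summands into the Bassian complement) that the paper leaves implicit in its one-sentence derivation.
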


We return now to consideration of groups which are nearly generalized Bassian. Such groups are, of course, of finite torsion-free rank. (We remark that this does not require the full force of the result of Danchev and Keef mentioned in the first bullet point above: if a nearly generalized Bassian group is not of finite torsion-free rank then it will have a free subgroup of infinite rank, but such a group is definitely {\it not} generalized Bassian by \cite[Example 2.7]{CDG2}.) In addition, if $G$ is a nearly generalized Bassian group, then we have:

\medskip

$\bullet$ If $X$ is a proper torsion subgroup of $G$, then $X$ is generalized Bassian and so $X$ is of the form $X = \bigoplus X_p$, where each $X_p$ is a $p$-group of the form $X_p = E_p \oplus F_p$ with $E_p$ elementary and $F_p$ finite.

\medskip

Our first step in classifying groups which are nearly generalized Bassian is to deal with divisible groups.

\begin{proposition}\label{divnear} Let $G$ be a divisible group which is nearly generalized Bassian. Then, the following three points are true:
	
(i) $G$ is either torsion or torsion-free;
	
(ii) if $G$ is torsion, then $G \cong \qc$ for some fixed but arbitrary prime $p$;
	
(iii) if $G$ is torsion-free, then $G$ is a finite dimensional $\Q$-vector space and $G$ is generalized Bassian (and even Bassian).
\end{proposition}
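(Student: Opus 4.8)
The plan is to reduce all three parts to a single structural fact, namely that \emph{no nonzero divisible torsion group is generalized Bassian}, and then to read off each conclusion from the standard splitting of a divisible group. To record that observation, suppose $D$ is a nonzero divisible torsion group that is generalized Bassian. Since $D$ equals its own torsion subgroup, the bullet point above on the torsion subgroup of a generalized Bassian group would force $D = \bigoplus_p(F_p \oplus E_p)$ with each $F_p$ finite and each $E_p$ elementary, hence $D$ would be reduced; this contradicts $D$ being a nonzero divisible group. (Self-containedly one may instead split off a single quasi-cyclic summand, $D = \qc \oplus D'$, take $H \leq \qc$ of order $p$, and note that $D/H \cong D$ gives an injection $D \hookrightarrow D/H$, while the finite group $H$ is not a summand of the divisible group $D$; so $D$ is not generalized Bassian.)

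Next I would recall that a divisible group splits as $G = T \oplus V$, where $T = T(G)$ is divisible torsion and $V \cong G/T(G)$ is a $\Q$-vector space. For (i), if both $T \neq 0$ and $V \neq 0$, then $T$ is a \emph{proper} subgroup of $G$ (proper precisely because $V \neq 0$) which is a nonzero divisible torsion group, hence not generalized Bassian by the observation above; this contradicts $G$ being nearly generalized Bassian, so one of $T$, $V$ must vanish, giving (i). For (ii), if $G$ is torsion then it is a direct sum of quasi-cyclic groups; were $G \not\cong \qc$, this sum would have at least two summands, so some single copy of $\qc$ would be a proper subgroup of $G$ (its complement being nonzero), again a nonzero divisible torsion group that is not generalized Bassian, which is impossible. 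Hence $G \cong \qc$.

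For (iii), if $G$ is torsion-free then it is a $\Q$-vector space, and by the remark preceding this proposition a nearly generalized Bassian group has finite torsion-free rank (an infinite-dimensional $\Q$-space contains a proper free subgroup of infinite rank, which is not generalized Bassian by \cite[Example 2.7]{CDG2}). Thus $G \cong \Q^{n}$ for some finite $n$, and by the non-reduced case of the Bassian Theorem quoted in the Introduction, taking the reduced summand to be $0$, the group $\Q^{n}$ is Bassian and therefore generalized Bassian. The only genuinely non-formal ingredient in the whole argument is the key observation that divisible torsion groups are never generalized Bassian; once that is in hand each part is essentially a one-line application, and the remaining care is simply to verify in (i) and (ii) that the witnessing divisible torsion subgroup is genuinely proper.
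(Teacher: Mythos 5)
Your proposal is correct and takes essentially the same route as the paper: in each case one exhibits a proper subgroup that is nonzero divisible torsion --- the paper picks a proper quasi-cyclic subgroup, you take the torsion summand of $G = T \oplus V$ or a $\qc$ summand --- observes that such a subgroup cannot be generalized Bassian, and handles the torsion-free case via the finite-rank observation preceding the proposition. The only real difference is that you actually prove the key fact (both via the structure of torsion subgroups of generalized Bassian groups and via the self-contained argument that $H$ of order $p$ is an injection kernel $D \hookrightarrow D/H \cong D$ yet not a summand of the divisible group $D$), whereas the paper simply asserts that $\qc$ is not generalized Bassian; this makes your version slightly more general and self-contained, but the underlying idea is identical.
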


\begin{proof} (i) If $G$ is mixed and divisible, then it must have a proper subgroup which is isomorphic to $\qc$ for some prime $p$. Since $\qc$ is not generalized Bassian, no such group exists.
	
(ii) If $G$ is torsion divisible of rank greater than one, then $G$ has a proper quasi-cyclic  subgroup which would then be generalized Bassian. As observed above, this is impossible, so $G$ has rank one and thus is isomorphic to $\qc$ for some prime $p$.	
	
(iii) If $G$ is torsion-free divisible, then, as observed above, $G$ must be of finite rank, as required.
\end{proof}

It follows from an easy application of the above Proposition \ref{divnear} that if $G$ is a nearly generalized Bassian group which is neither divisible nor reduced, then $G$ has the form $G = \bigoplus_n \Q \oplus G'$, where $0 \neq n$ is finite and $G'$ is a reduced nearly generalized Bassian group.

Our next result clarifies the situation for reduced nearly generalized Bassian groups which are either torsion, torsion-free or splitting mixed, respectively.

\begin{proposition}\label{rednear} Let $G$ be a reduced group which is nearly Bassian. Then if $G$ is either torsion, torsion-free or splitting mixed, then $G$ is generalized Bassian.
\end{proposition}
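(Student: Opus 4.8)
The plan is to reduce, in each of the three cases, to producing a decomposition $G = E \oplus H$ with $E$ elementary and $H$ Bassian, because by the classifications recorded in Proposition~2.8, Theorem~2.10 and Proposition~2.11 of \cite{CDG2} (already invoked above) a torsion, torsion-free or splitting mixed group is generalized Bassian precisely when it admits such a decomposition. Throughout I would use the two facts recalled just before the statement, namely that a nearly generalized Bassian group has finite torsion-free rank, and that every \emph{proper} torsion subgroup $X$ of $G$ splits as $X = \bigoplus_p X_p$ with each $X_p = E_p \oplus F_p$, $E_p$ elementary and $F_p$ finite; recall also that the hypothesis means every proper subgroup of $G$ is generalized Bassian.

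Two of the three cases are quick. If $G$ is torsion-free it has finite rank, and a reduced torsion-free group of finite rank is automatically Bassian: an injection $G \to G/H$ preserves the finite rank, forcing $H$ to be torsion and hence trivial. Thus $G$ is generalized Bassian via the trivial decomposition $E = 0$. If $G$ is splitting mixed, write $G = T \oplus K$ with $T = T(G)$ and $K \cong G/T$ torsion-free of finite rank; as $G$ is mixed, $T$ is a proper subgroup, hence generalized Bassian, so $T = E \oplus F$ with $E$ elementary and $F$ a torsion Bassian group, while $K$ is Bassian by the torsion-free case. Then $G = E \oplus (F \oplus K)$, and $F \oplus K$ is Bassian as a finite direct sum of Bassian groups, giving the required decomposition.

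For the torsion case I would split according to the number of primary components. If $G$ has at least two nonzero primary components, then each $G_p$ is a proper subgroup, hence generalized Bassian, hence of the form $E_p \oplus F_p$; collecting the pieces yields $G = (\bigoplus_p E_p) \oplus (\bigoplus_p F_p)$, where the first summand is elementary and the second is torsion Bassian, so $G$ is generalized Bassian. The substantive sub-case, and the main obstacle, is when $G$ is a reduced $p$-group. Here I would first note that every proper subgroup of $G$ is a proper torsion subgroup and hence, being ``elementary plus finite'', is bounded. I then claim $G$ itself is bounded: if $G$ were unbounded, then either $pG \neq G$, so $pG$ is a proper, hence bounded, subgroup, forcing $G$ to be bounded, a contradiction; or $pG = G$, so $G$ is divisible and, being reduced, trivial, again absurd.

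Finally, with $G$ a bounded reduced $p$-group, Pr\"ufer's theorem writes $G$ as a direct sum of cyclic groups. If for some $n \geq 2$ there were infinitely many summands isomorphic to $\Z(p^n)$, then $G$ would contain a proper subgroup isomorphic to $\bigoplus_{\aln}\Z(p^n)$, which is not of the form ``elementary plus finite'' and so is not generalized Bassian, contradicting the hypothesis. Gathering the finitely many cyclic summands of order $\geq p^2$ into a finite group $F$ and the remaining summands of order $p$ into an elementary group $E$ then gives $G = E \oplus F$, which is generalized Bassian. The one delicate point is precisely this last extraction: one must ensure the copy of $\bigoplus_{\aln}\Z(p^n)$ is a \emph{proper} subgroup even when $G$ is itself a homogeneous such sum, which the ``delete a single summand'' device handles, since deleting one summand of $\bigoplus_{\aln}\Z(p^n)$ produces an isomorphic proper subgroup.
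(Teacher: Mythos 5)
Your proof is correct, but in the one substantive case --- $G$ a reduced $p$-group --- it takes a genuinely different route from the paper. The paper's argument is shorter: since a nonzero reduced $p$-group always has a cyclic direct summand, it writes $G = \Z(p^n) \oplus H$, notes that $H$ is a \emph{proper} subgroup and hence generalized Bassian, so $H = E \oplus F$ with $E$ elementary and $F$ finite, and then reassembles: $G = E \oplus (F \oplus \Z(p^n))$ is again of the form ``elementary plus finite'', hence generalized Bassian by \cite[Theorem 2.10]{CDG2}. You instead first show that every proper subgroup is bounded, deduce that $G$ itself is bounded (via the dichotomy $pG \neq G$ or $pG = G$), invoke Pr\"ufer's theorem, and control the multiplicity of each cyclic summand of order $\geq p^2$ with the delete-a-summand trick. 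Both are sound; the paper buys brevity from the standard fact that a non-divisible $p$-group has a cyclic direct summand, while your version is more self-contained and yields the extra information that a nearly generalized Bassian reduced $p$-group is bounded. Your torsion-free and splitting-mixed cases agree in substance with the paper's, but note one citation-level point: the ``if'' direction of your opening equivalence (``elementary plus Bassian implies generalized Bassian'') is not what Proposition 2.11 of \cite{CDG2} states --- that result gives necessity only; for splitting mixed groups the sufficiency is supplied by \cite[Corollary~2.12]{CDG2} (both parts reduced generalized Bassian forces $G$ generalized Bassian), which is exactly what the paper invokes and what your decomposition $G = E \oplus (F \oplus K)$ implicitly requires.
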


\begin{proof} Since finite groups are always generalized Bassian (and even Bassian), we may assume that $G$ is infinite. Furthermore, we have observed above that $r_0(G)$ is finite, so if $G$ is torsion-free it is a reduced group of finite rank which is then certainly generalized Bassian.
	
If $G$ is a $p$-group, then we see that $G$ has a direct decomposition $G = \Z(p^n) \oplus H$ for some finite $n$. But then, by hypothesis, $H$ is a generalized Bassian $p$-group and hence has the form $H = E \oplus F$, where $E$ is an elementary $p$-group and $F$ is finite. But then $G$, itself, has a similar decomposition and hence, with the aid of \cite[Theorem 2.10]{CDG2}, $G$ is generalized Bassian. If, however, $G$ is torsion, then it follows easily using the primary decomposition of $G$ and the result just established for $p$-groups that $G$ is generalized Bassian, as wanted.
	
Finally, if $G$ is splitting mixed, both its torsion and torsion-free parts are reduced generalized Bassian and so $G$, itself, is then generalized Bassian by virtue of \cite[Corollary~2.12]{CDG2}, as desired.
\end{proof}

Combining the results obtained above and using the classification results in \cite{CDG2}, it follows readily that we obtain:

\begin{proposition} Let $G$ be a nearly generalized Bassian group which is either torsion, torsion-free or splitting mixed, then either $G \cong \qc$ for some prime $p$, or $G$ is generalized Bassian.
\end{proposition}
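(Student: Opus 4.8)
The plan is to analyse $G$ through its maximal divisible subgroup. Write $G = D \oplus G'$, where $D$ is the maximal divisible subgroup of $G$ and $G'$ is reduced. The two extreme cases are immediate. If $D = 0$, then $G$ is reduced and of one of the three admissible types, so Proposition~\ref{rednear} gives that $G$ is generalized Bassian. If instead $G' = 0$, then $G = D$ is divisible and Proposition~\ref{divnear} applies directly: since a mixed divisible group is excluded by part (i), $G$ is either torsion, whence $G \cong \qc$ by part (ii), or torsion-free, whence $G$ is generalized Bassian by part (iii).

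It remains to treat the genuinely intermediate case $D \neq 0 \neq G'$. Here $D$ is a \emph{proper} direct summand of $G$, so every proper subgroup of $D$ is a proper subgroup of $G$; hence $D$ is itself nearly generalized Bassian, and being proper in $G$ it is also generalized Bassian by hypothesis. By Proposition~\ref{divnear}(i), $D$ is torsion or torsion-free. If $D$ were torsion it would be $\cong \qc$ by part (ii), contradicting the fact that $\qc$ is not generalized Bassian. Therefore $D$ is torsion-free, say $D \cong \bigoplus_n \Q$ with $n \geq 1$ finite, and in particular $G$ is not torsion. First I would dispose of the torsion-free case: every nearly generalized Bassian group has finite torsion-free rank (as recorded earlier), and a torsion-free group of finite rank is generalized Bassian by the classification of \cite{CDG2}.

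The only remaining possibility is that $G$ is splitting mixed with $D \cong \bigoplus_n \Q$. Writing $G = T \oplus K$ with $T = T(G)$ and $K$ torsion-free, the torsion-freeness of $D$ forces $T$ to be reduced and $D = d(K)$; splitting $d(K)$ off $K$ yields $G = D \oplus (T \oplus K_r)$ with $K_r$ reduced, so that $G' \cong T \oplus K_r$ is either torsion or reduced splitting mixed. In either case Proposition~\ref{rednear} shows $G'$ is generalized Bassian. Finally I would regroup as $G = \bigl(\bigoplus_n \Q \oplus K_r\bigr) \oplus T$, where the first summand is torsion-free of finite rank and hence generalized Bassian, while $T$ is a torsion generalized Bassian group; the splitting-mixed classification of \cite{CDG2} then yields that $G$ itself is generalized Bassian.

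The step I expect to be the main obstacle is this last reassembly: confirming that attaching a finite-rank torsion-free divisible summand to a reduced generalized Bassian group of the correct type produces a generalized Bassian group. This is precisely where one must invoke the internal structural characterisations of \cite{CDG2} --- finiteness of $r_0$ together with a torsion part of ``elementary plus finite'' shape, stable under such direct sums --- rather than argue from the definition, and where care is needed to verify that $G'$ genuinely falls into the torsion, torsion-free, or splitting-mixed regime covered by Proposition~\ref{rednear} before that proposition can be applied.
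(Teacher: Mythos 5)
Your proof is correct and takes essentially the same route as the paper: the paper obtains this proposition simply by ``combining the results obtained above'' --- namely Proposition~\ref{divnear} for the divisible case, Proposition~\ref{rednear} for the reduced case, and the observation that a nearly generalized Bassian group which is neither divisible nor reduced splits as $\bigoplus_n \Q \oplus G'$ with $G'$ reduced nearly generalized Bassian --- together with the classification results of \cite{CDG2}, which are exactly the ingredients you assemble. The final reassembly step you flag as the main obstacle is precisely what the paper subsumes under its appeal to the torsion-free and splitting-mixed classifications in \cite{CDG2}, so your treatment just makes the paper's one-line deduction explicit.
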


To complete the classification of nearly generalized Bassian groups, it remains to consider the situation in which $G$ is nearly generalized Bassian and genuinely mixed. For the remainder of this section we shall assume $G$ is of this type. 	

The following observations will be useful in dealing with the genuinely mixed nearly generalized Bassian groups.

\medskip

$\bullet$ If $\varphi:G \to G/N$ is an injection then, since $r_0(G)$ is finite, we have $r_0(G) \leq r_0(G/N) = r_0(G) - r_0(N)$, so that $r_0(N) = 0$ or, equivalently that $N \leq T$.

\medskip

$\bullet$ If $\varphi$ embeds $G \to G/N$, then we have that $\varphi(T) \leq T(G/N) = T/N$ since $N\leq T$. Furthermore, as we are assuming $G$ is genuinely mixed, $T$ itself is generalized Bassian, and it follows that $N$ is a direct summand of $T$, so that if $N = \bigoplus N_p$, then each primary component $N_p$ of $N$ is a direct summand of the corresponding $T_p$.

\medskip

$\bullet$ Since $\varphi(T) \leq T/N$, the full invariance of the primary components gives that $\varphi(T_p) \leq T_p/N_p$ and so $\varphi(pT_p) \leq p(T_p/N_p) = (pT_p + N_p)/N_p$. Hence, as $pT_p = pF_p$, we have that
$$pF_p = \varphi(pT_p) \leq (pF_p + N_p)/N_p \cong pF_p/(N_p \cap pF).$$
However, $pF_p$ is finite so that $N_p \cap pF_p = \{0\}$, and it now follows from Lemma~\ref{basic} below that $N_p$ is necessarily elementary.

\medskip

We are now ready to prove the promised lemma applied above.

\begin{lemma}\label{basic} Suppose that $A,B,C$ are $p$-groups with $A \leq B \oplus C$. If $B$ is elementary and $A \cap pC = \{0\}$, then $A$ is elementary.
\end{lemma}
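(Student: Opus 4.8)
The plan is to prove directly that $pA = \{0\}$ by a single coordinate computation. First I would fix an arbitrary element $a \in A$ and use the direct decomposition $B \oplus C$ to write $a = b + c$ with $b \in B$ and $c \in C$. The crucial observation is that multiplication by $p$ annihilates the $B$-component: since $B$ is elementary we have $pb = 0$, whence $pa = pc$.

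Next I would exploit the two ways of reading the element $pa$. On the one hand $pa \in A$, because $A$ is a subgroup and hence closed under multiplication by $p$; on the other hand $pa = pc \in pC$. Consequently $pa \in A \cap pC = \{0\}$, so $pa = 0$. As $a$ was arbitrary, this yields $pA = \{0\}$, that is, $A$ is elementary, as desired.

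I do not expect any genuine obstacle here: once the decomposition $a = b + c$ is written down, the conclusion is immediate, and the whole argument is essentially a one-liner. The only point requiring a little care is to invoke the hypothesis in its precise form $A \cap pC = \{0\}$ (rather than the weaker-looking $A \cap C = \{0\}$), and to notice that it is exactly the \emph{elementary} hypothesis on $B$ --- forcing $pb = 0$ --- that permits the replacement of $pa$ by the element $pc$ of $pC$, which is what makes the trivial intersection applicable.
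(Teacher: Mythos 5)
Your proof is correct and is essentially identical to the paper's own argument: both write $a=b+c$, use the elementary hypothesis on $B$ to get $pa=pc$, and conclude $pa\in A\cap pC=\{0\}$. No differences worth noting.
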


\begin{proof} Let $x \in A$, so that $x = b + c$ with $b \in B, c \in C$. Then $px = pb + pc = pc \in A \cap pC = \{0\}$.
\end{proof}

\medskip

\begin{remark} We can actually say a bit more. When we work with a single prime $p$, then we can modify the $p$-primary component so that in the decomposition $E_p \oplus F_p$ that $F_p$ has no summands of order $p$. The resulting enlarged elementary group will continue to be a direct summand of $G$. In this situation, we can conclude that $N_p \leq E_p$: if $N_p$ is not contained in $E_p$, then $N_p \cap F_p$ is a non-zero summand of the elementary group $N_p$ and hence a summand of $T_p$. This, however, is impossible since if $\langle x\rangle$ is a summand of $N_p$, then the height $ht_{T_p}(x)$ is necessarily $0$, but any $x \in N_p \cap F_p$ must be of the form $x = py$, because $F_p$ has no summands of order $p$. In fact, it is easy to see that this procedure will work for any finite set of primes. Thus, in the genuinely mixed situation, we may assume that $N_p$ is actually a summand of $T_p$.
\end{remark}

We, thus, now come to the following key statement.

\begin{proposition}\label{major} Let $G$ be a genuinely mixed group which is nearly generalized Bassian. Then, $G$ is generalized Bassian.
\end{proposition}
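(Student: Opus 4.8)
The plan is to take an arbitrary embedding $\varphi\colon G \to G/N$ and produce a decomposition of $G$ that visibly splits off $N$. From the observations preceding the statement I may already assume that $N \le T$, that $N$ is a direct summand of $T$, and that, writing $N = \bigoplus_p N_p$, each $N_p$ is an \emph{elementary} $p$-group (Lemma~\ref{basic}) which is a direct summand of the corresponding $T_p$. Since $G$ is genuinely mixed, $T$ is a proper subgroup and hence generalized Bassian, so each $T_p$ is of the form ``elementary plus finite'' and, by \cite[Lemma 2.3]{CDG2}, so is any summand. Thus a complement $M_p$ of $N_p$ in $T_p$ has the form $M_p = E_{M,p} \oplus F_{M,p}$ with $E_{M,p}$ elementary and $F_{M,p}$ finite. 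Setting $\hat E_p := N_p \oplus E_{M,p}$ and $\hat F_p := F_{M,p}$ gives $T_p = \hat E_p \oplus \hat F_p$ with $\hat E_p$ elementary and $N_p \le \hat E_p$; writing $\hat E := \bigoplus_p \hat E_p$ and $\hat F := \bigoplus_p \hat F_p$ we obtain $T = \hat E \oplus \hat F$ with $N \le \hat E$.

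The heart of the argument is then to show that $G$ admits a decomposition $G = \hat E \oplus H$ with $H$ Bassian and with this \emph{prescribed} elementary summand $\hat E$. The key point is that the construction proving Theorem~\ref{DKdecomp} never actually invokes the generalized Bassian property of the ambient group: it uses only that $G$ is genuinely mixed, that $r_0(G)$ is finite, and that every primary component of $T$ is ``elementary plus finite''. All three hold for our nearly generalized Bassian $G$ (finiteness of $r_0(G)$ was recorded above, and the form of the $T_p$ follows from $T$ being generalized Bassian). So I would embed $G$ into a group $G' \in \mathcal{P}$ with the same torsion subgroup via Proposition~24.6 of \cite{F1}, apply Proposition~\ref{elplusb} to $G'$ feeding in the decomposition $T = \hat E \oplus \hat F$, so that $\hat E$ is retained as the elementary summand while $\hat F$ is absorbed into the Bassian part, and finally descend back to $G$ by modularity (using $\hat E \le T \le G$) to get $G = \hat E \oplus (G \cap H') = \hat E \oplus H_0$ with $H_0$ Bassian.

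Granting this, the conclusion is immediate. As $\hat E$ is an elementary group, each $\hat E_p$ is an $\mathbb{F}_p$-vector space and the subspace $N_p \le \hat E_p$ splits off, say $\hat E_p = N_p \oplus \hat E'_p$; hence $\hat E = N \oplus \hat E'$ with $\hat E' := \bigoplus_p \hat E'_p$. Therefore $G = \hat E \oplus H_0 = N \oplus (\hat E' \oplus H_0)$, so $N$ is a direct summand of $G$. Since $\varphi$ was an arbitrary embedding of $G$ into a quotient $G/N$, this proves that $G$ is generalized Bassian.

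The step I expect to be the main obstacle is the middle paragraph, and specifically the claim that the decomposition produced by Proposition~\ref{elplusb} can be taken to have $\hat E$ \emph{itself} (not merely an abstractly isomorphic elementary group) as its elementary summand. This requires a careful reading of the construction behind Proposition~\ref{elplusb} (the extension of \cite[Corollary 3.14]{CDG2}) to confirm that a prescribed elementary summand of the torsion is preserved, with only the finite primary parts $\hat F_p$ being relegated to the Bassian complement. Should that compatibility fail, one would instead have to reconcile the two elementary summands directly; this is where the bookkeeping with the finite $p$-parts would be delicate, and where the purity of $N$ in $G$ established in the observations would most naturally be brought to bear.
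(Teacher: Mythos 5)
You are right that the middle paragraph is the crux, and unfortunately the claim it rests on is false, not merely unverified. The ``prescribed elementary summand'' version of Proposition~\ref{elplusb}/Theorem~\ref{DKdecomp} cannot hold under the three hypotheses you invoke (genuinely mixed, $r_0(G)$ finite, every $T_p$ elementary plus finite), because your recipe puts no lower bound on $\hat F$: if $T$ itself is elementary, then every complement $M_p$ of $N_p$ in $T_p$ is elementary, so your construction forces $\hat E_p = N_p \oplus E_{M,p} = T_p$, i.e.\ $\hat E = T$, and the desired decomposition $G = \hat E \oplus H_0$ would say that the torsion subgroup splits off --- contradicting the standing assumption that $G$ is genuinely mixed. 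This configuration really occurs for groups satisfying your hypotheses together with a nonzero admissible $N$: let $T_p = \bigoplus_{i \in \N}\langle e_{p,i}\rangle$ be elementary of countable rank, let $x = (e_{p,0})_p \in \prod_p T_p$, and let $G$ be the pure closure of $\bigoplus_p T_p + \Z x$ in $\prod_p T_p$; then $G$ is genuinely mixed with $r_0(G)=1$ and elementary torsion, the coordinate shift $e_{p,0}\mapsto e_{p,0}$, $e_{p,i}\mapsto e_{p,i+1}$ $(i\ge 1)$ induces an injective endomorphism $\Phi$ of $G$ fixing $x$, and composing $\Phi$ with the projection modulo $N = \bigoplus_p\langle e_{p,1}\rangle$ gives an embedding $G \to G/N$ with $N \neq 0$; yet $T$ is certainly not a summand of $G$. (In this example $N$ itself \emph{is} a direct summand of $G$, so it is your route, not the conclusion, that breaks.) A further warning sign: beyond the preliminary observations, your argument never applies the hypothesis that \emph{proper subgroups} of $G$ are generalized Bassian; were it correct, it would essentially settle the paper's open Problems~2 and~3.

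The paper's proof avoids splitting any infinite elementary group off $G$ and instead works one prime at a time, which is exactly where the ``nearly'' hypothesis enters. Fix $p$ with $T_p \neq \{0\}$. Since $T_p$ is bounded and pure in $G$, it is a summand, $G = T_p \oplus G'$, and correspondingly $N = N_p \oplus N'$ with $N' = \bigoplus_{q\neq p} N_q$ and $G/N = (T_p/N_p) \oplus (G'/N')$. Writing $\pi$ for the projection of $G/N$ onto $G'/N'$, the restriction of $\pi\varphi$ to $G'$ is monic: an element of its kernel is carried by $\varphi$ into the $p$-group $T_p/N_p$, which is impossible both for torsion elements of $G'$ (their orders are prime to $p$ and $\varphi$ preserves orders) and for elements of infinite order. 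Since $T_p \neq \{0\}$, the subgroup $G'$ is \emph{proper}, hence generalized Bassian by hypothesis, so $N'$ is a summand of $G'$; as $N_p$ is a summand of $T_p$, it follows that $N = N_p \oplus N'$ is a summand of $G = T_p \oplus G'$, whence $G$ is generalized Bassian. If you want to rescue your strategy, you must at minimum replace the global splitting of $\hat E$ by an argument that invokes the generalized Bassian property of some proper subgroup of $G$; the paper's choice of $G'$, the complement of one nonzero primary component, is the natural candidate.
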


\begin{proof} Let us notice the crucial fact that, if $\varphi: G \to G/N$ is an embedding for some $N \neq 0$, then from our observations above we can find a decomposition $G/N = (T_p/N_p) \oplus (G'/N')$, where $N' = \bigoplus\limits_{q \neq p}N_q$ and $G = T_p \oplus G'$.
	
So, we are now in the following situation: $\varphi$ is an embedding of $G$ into $G/N$ and we can decompose $G = T_p \oplus G'$ with $T_p \neq \{0\}$ and $N = N_p \oplus N'$, where $N' = \bigoplus\limits_{q \neq p}N_q$. Then, $G/N = (T_p/N_p) \oplus(G'/N')$. Now, let $\pi$ be the canonical projection of $G/N$ onto $G'/N'$ and set $\psi = \pi\varphi$. Consider the restriction $\psi' = \psi\upharpoonright G'$, a mapping from $G' \to G'/N'$. We claim that $\psi'$ is monic; if not, then there is a nonzero $x \in G'$ such that $\psi'(x) = 0$, so that $\varphi(x) \in \Ker \pi = T_p/N_p$, which quotient is a $p$-group. However, if $x$ is a torsion element of $G'$, then its order is relatively prime to $p$. Since $\varphi$ is monic, it preserves orders, so that $x = 0$, a contradiction. Thus, $x$ must be an element of infinite order, which maps to an element of order $p^n$ for some finite $n$. This, too, is impossible, so $\psi'$ is monic, as claimed.
	
The mapping $\pi\varphi: G' \to G'/N'$ is an embedding and, by the hypothesis posed on $G$, we know that $G'$ is generalized Bassian, whence $N'$ is a direct summand of $G$. It now follows immediately that $N$ is then a direct summand of $G$, since, as observed above, $N_p$ is a direct summand of $T_p$. Consequently, $G$ is also generalized Bassian, as asserted.
\end{proof}

Summarizing the results obtained above, we have established the following curious assertion that is our main motivating result.

\begin{theorem}\label{chief} If $G$ is an arbitrary nearly generalized Bassian group, then either $G$ is a quasi-cyclic group or $G$ is generalized Bassian.
\end{theorem}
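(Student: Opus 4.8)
The plan is to assemble Theorem~\ref{chief} directly from the case analysis already developed throughout the section, treating it as a synthesis rather than a fresh argument. The groups under consideration split into mutually exhaustive structural classes according to the now-familiar fourfold dichotomy: divisible versus reduced, and within each the torsion/torsion-free/mixed subdivision. Since the theorem asserts that a nearly generalized Bassian group is either quasi-cyclic or generalized Bassian, the strategy is to verify that each class lands in one of these two outcomes, invoking the propositions proved above as the workhorses.

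First I would dispose of the divisible case by quoting Proposition~\ref{divnear}: a divisible nearly generalized Bassian group is either torsion--- in which case it is isomorphic to $\qc$ for some prime $p$---or torsion-free, in which case it is a finite-dimensional $\Q$-vector space and hence generalized Bassian. The mixed divisible subcase is excluded outright by part (i) of that proposition. Next I would treat the groups that are neither divisible nor reduced: by the remark following Proposition~\ref{divnear}, such a group has the shape $G = \bigoplus_n \Q \oplus G'$ with $G'$ reduced and nearly generalized Bassian, so the problem reduces to the reduced case after noting that a finite direct sum with a finite-dimensional $\Q$-space preserves the generalized Bassian property.

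For reduced groups, the torsion, torsion-free, and splitting mixed cases are handled by Proposition~\ref{rednear}, which shows outright that $G$ is generalized Bassian in each of these three situations; combining this with the divisible analysis yields the already-stated Proposition asserting that a torsion, torsion-free, or splitting mixed nearly generalized Bassian group is either $\qc$ or generalized Bassian. The one remaining possibility---and the genuinely substantive one---is that $G$ is reduced and \emph{genuinely mixed}, and here I would simply invoke Proposition~\ref{major}, which establishes that such a $G$ is generalized Bassian. Assembling these pieces, every nearly generalized Bassian group falls into a class where it is either quasi-cyclic or generalized Bassian, which is precisely the claim.

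The main obstacle is not in this final assembly, which is essentially bookkeeping, but was already confronted and resolved in Proposition~\ref{major}: namely, showing that in the genuinely mixed case an embedding $\varphi: G \to G/N$ forces $N$ to be a summand of $G$. The crux there is the observation that $N \leq T$ (from finiteness of $r_0(G)$ and rank additivity), that each $N_p$ is elementary (via Lemma~\ref{basic} applied using $N_p \cap pF_p = \{0\}$) and in fact a summand of $T_p$ (by the Remark), and finally the projection argument showing that $\pi\varphi$ restricts to a monic map $G' \to G'/N'$, so that the generalized Bassian property of the \emph{proper} subgroup $G'$ can be brought to bear. Once that proposition is in hand, Theorem~\ref{chief} requires only that one verify the class decomposition is exhaustive and that each class has been accounted for, so I would keep the proof of the theorem itself short and point to the constituent results.
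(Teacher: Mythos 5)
Your overall architecture is the same as the paper's: Theorem~\ref{chief} is obtained there purely as a synthesis of Proposition~\ref{divnear}, Proposition~\ref{rednear} together with the classification results of \cite{CDG2}, and Proposition~\ref{major}, with no further argument. However, there is one genuine gap in your version: the step in which you dispose of the groups that are neither divisible nor reduced by ``noting that a finite direct sum with a finite-dimensional $\Q$-space preserves the generalized Bassian property.'' This closure property is nowhere proved in the paper and is not known in the generality you need. In the critical subcase --- $G = \bigoplus_n \Q \oplus G'$ with $G'$ reduced and genuinely mixed --- the Danchev--Keef decomposition (Theorem~\ref{DKdecomp}) writes $G' = E \oplus H$ with $E$ elementary and $H$ Bassian, so your claim would require $E \oplus \bigl(\bigoplus_n \Q \oplus H\bigr)$, a direct sum of an elementary group and a Bassian group, to be generalized Bassian; whether every such sum is generalized Bassian is precisely the open conjecture of \cite{DK} recalled in the paper (compare Problems 2 and 3). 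So this step cannot be waved through as bookkeeping, and as stated your reduction to the reduced case does not go through.

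The gap is easily repaired, and the repair is exactly what the paper does: Proposition~\ref{major} nowhere assumes reducedness --- it applies to \emph{every} genuinely mixed nearly generalized Bassian group --- so the non-reduced genuinely mixed case should be closed by citing \ref{major} directly rather than by passing to $G'$ and summing back. The remaining non-reduced cases are harmless: a non-reduced torsion-free nearly generalized Bassian group has finite rank and is therefore Bassian, and the non-reduced splitting mixed case falls under the combined Proposition in the paper, which rests on the classification results of \cite{CDG2} (Proposition 2.8, Theorem 2.10, Proposition 2.11) and is not restricted to reduced groups. With the direct-sum closure step deleted and the cases re-routed in this way, your argument coincides with the paper's proof.
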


We close our work in this section with the following two difficult questions (compare also with the queries from \cite{DK}).

\medskip

\noindent{\bf Problem 2}. If the Bassian group $B$ has no direct summands isomorphic to $\Z(p)$ for each prime $p$, is then the direct sum $E\oplus B$ a generalized Bassian group, whenever $E$ is an elementary group?

\medskip

We would like to notice that in \cite{DK} was established that {\it the direct sum of an elementary group and a Bassian group such that every its elementary summand is finite is a generalized Bassian group}.

\medskip

\noindent{\bf Problem 3}. Does it follow that the direct sum of a Bassian group and a generalized Bassian group remains a generalized Bassian group?

\section{Concluding Remarks}

Two obvious questions arise: what can be said about the hereditarily and super generalized Bassian groups (see Problem 1 quoted above)? A partial answer is easily given in relation to the first question.

If $G$ is generalized Bassian and either torsion, torsion-free or splitting mixed, then  every subgroup of $G$ is likewise generalized Bassian. By utilizing the \lq elementary plus Bassian\rq \ decomposition of Danchev-Keef given in Theorem \ref{DKdecomp} in Section \ref{ngbg}, it is clear that any subgroup of a generalized Bassian group is again the direct sum of an elementary group and a Bassian group. However, it is not clear, and it seems to be an extremely difficult question, as to how to show that such a group is generalized Bassian. However, several examples are given in \cite{DK} of sufficient conditions to ensure the generalized Bassian property holds, but no a complete result is known yet; see Conjecture 1.3 in \cite{DK}.



\medskip

\noindent{\bf Funding:} The scientific work of the first-named author (P.V. Danchev) was supported in part by the Bulgarian National Science Fund under Grant KP-06 No. 32/1 of December 07, 2019, as well as by the Junta de Andaluc\'ia under Grant FQM 264, and by the BIDEB 2221 of T\"UB\'ITAK.

\end{document}